\documentclass[a4paper,fleqn]{cas-sc}

\usepackage[numbers, sort&compress]{natbib}

\usepackage{graphicx}
\usepackage{multirow}
\usepackage{booktabs}

\usepackage{amsmath, amssymb, amsfonts}
\usepackage{amsthm}
\usepackage{mathrsfs}

\usepackage[title]{appendix}
\usepackage{xcolor}
\usepackage{textcomp}
\usepackage{manyfoot}
\usepackage{algorithm}
\usepackage{algorithmicx}
\usepackage{algpseudocode}
\usepackage{listings}
\usepackage{float}

\theoremstyle{plain}
 
\newtheorem{lemma}{Lemma}[section]
\newtheorem{theorem}{Theorem}[section]
\newtheorem{corollary}[theorem]{Corollary}

\theoremstyle{definition}
\newtheorem{definition}{Definition}[section]
\newtheorem{example}{Example}[section]
\newtheorem{remark}{Remark}[section]
\newtheorem{prop}{Proposition}

\newcolumntype{L}{>{\centering\arraybackslash}m{1.8cm}}

\def\tsc#1{\csdef{#1}{\textsc{\lowercase{#1}}\xspace}}
\tsc{WGM}
\tsc{QE}
\tsc{EP}
\tsc{PMS}
\tsc{BEC}
\tsc{DE}
\begin{document}
\let\WriteBookmarks\relax
\def\floatpagepagefraction{1}
\def\textpagefraction{.001}
\shorttitle{Minimal monomial basis for Birkhoff interpolation}
\shortauthors{Y. Li, X. Jiang and Z. Li}
%\begin{frontmatter}

\title [mode = title]{Computing the minimal monomial basis for multivariate Birkhoff interpolation}                      
%\tnotemark[1,2]

%\tnotetext[1]{This document is the results of the research project funded by the National Science Foundation.}

%\tnotetext[2]{The second title footnote which is a longer text matter to fill through the whole text width and overflow into another line in the footnotes area of the first page.}

\author{Yuanhe Li}%[type=editor,
                      %  auid=000,bioid=1,
                      %  prefix=Sir,
                      %  role=Researcher,
                      %  orcid=0000-0001-0000-0000]
%\cormark[1]
%\fnmark[1]
\ead{13782387477@163.com}
%\ead[url]{www.jkkrishnan.in}

%\credit{Conceptualization of this study, Methodology, Software}

\affiliation{organization={School of Mathematics and Statistics, Changchun University of Science and Technology},
            %addressline={},
            city={Changchun},
            postcode={130000},
            country={China}}
%\affiliation[1]{organization={Department of Physics, J.K. Institute of Science},
               % addressline={Jawahar Nagar}, 
              %  city={Trivandrum},
%               citysep={}, % Uncomment if no comma needed between city and postcode
              %  postcode={695013}, 
              %  state={Kerala},
              %  country={India}}

\author{Xue Jiang}%[style=chinese]
\cormark[1]
%\author[2,3]{William {J. Hansen}}[%role=Co-ordinator, suffix=Jr,]
%\fnmark[2]
\ead{littledonna@163.com}
%\ead[URL]{https://www.university.org}

%\credit{Data curation, Writing - Original draft preparation}

%\affiliation[2]{organization={World Scientific University},
           %     addressline={Street 29}, 
           %     postcode={1011 NX}, 
           %      postcodesep={}, 
           %      city={Amsterdam},
           %      country={The Netherlands}}

\author{Zhe Li}
% \cormark[2]
% \fnmark[1,3]
 \ead{zheli200809@163.com}
% \ead[URL]{www.campus.in}

% \affiliation[3]{organization={University of Intelligent Studies},
             %    addressline={Street 15}, 
              %   city={Jabaldesh},
              %   postcode={825001}, 
             %    state={Orissa}, 
             %    country={India}}

\cortext[cor1]{Corresponding author}
%\cortext[cor2]{Principal corresponding author}
%\fntext[fn1]{This is the first author footnote, but is common to third author as well.}
%\fntext[fn2]{Another author footnote, this is a very long footnote and it should be a really long footnote. But this footnote is not yet sufficiently long enough to make two lines of footnote text.}

%\nonumnote{This note has no numbers. In this work we demonstrate $a_b$the formation Y\ _1 of a new type of polariton on the interface  between a cuprous oxide slab and a polystyrene micro-sphere placed on the slab.}

\begin{abstract}
This paper studies algorithms for computing the minimal monomial basis for multivariate Birkhoff interpolation problems. Our approach is built around the notion of a reverse reduced set, which serves as the key tool for bridging the interpolation conditions to the monomial basis, thereby avoiding the construction and evaluation of Vandermonde matrices required in existing algorithms. For the single-node case, we prove that after Gaussian elimination on the incidence matrix, the least monomials of the polynomial set corresponding to the interpolation conditions precisely constitute the minimal monomial basis. To handle the multi-node case, we exploit the one-to-one correspondence between interpolation functionals and formal power series, whereby interpolation conditions at arbitrary nonzero nodes can all be converted to the origin. This provides both a coherent theoretical framework and a constructive algorithm for determining a proper minimal monomial basis for the general multivariate Birkhoff interpolation problem. Numerical examples demonstrate the effectiveness of the proposed algorithm.

%\noindent\texttt{\textbackslash begin{abstract}} \dots 
%\texttt{\textbackslash end{abstract}} and
%\verb+\begin{keyword}+ \verb+...+ \verb+\end{keyword}+ 
%which contain the abstract and keywords respectively. 
%Each keyword shall be separated by a \verb+\sep+ command.
\end{abstract}

%\begin{graphicalabstract}
%\includegraphics{figs/cas-grabs.pdf}
%\end{graphicalabstract}

%\begin{highlights}
%\item The notion of a reverse reduced set bridges the interpolation conditions to the minimal monomial basis, enabling a purely algebraic approach. 
%\item For the single-node Birkhoff interpolation, the minimal monomial basis is obtained directly by Gaussian elimination on the incidence matrix.
%\item For the general multi-node case, interpolation conditions at arbitrary nodes are reduced to the origin via formal power series, yielding a constructive algorithm that avoids Vandermonde matrices and functional evaluations.
%\end{highlights}

\begin{keywords}
Multivariate Birkhoff interpolation \sep Formal power series \sep Minimal monomial basis
%quadrupole exciton \sep polariton \sep \WGM \sep \BEC
\end{keywords}

\raggedbottom

\maketitle

%%%%%%%%%%%%%%%%%%%%%%%%%%%%%%%%%%%%%%%%%%%%%%%%%%%%%%%%%%%%%%%%%%%%%%%%%%%%%%%%%%%%%%%%%%%%%%%%%%%%%%%%%%%%%%%%%%%%
%\raggedbottom

\section{Introduction}
As a fundamental problem, interpolation lies at the heart of computational mathematics and serves as a key tool in many of its subfields. The classical polynomial interpolation problem can be stated as follows: given a set of mutually distinct nodes and prescribed values of the function and its derivatives up to certain orders at each node, find a polynomial of minimal degree that satisfies all the given interpolation conditions.

Polynomial interpolation problems can be classified into ideal interpolation and non-ideal interpolation, according to the distribution of derivative orders in the interpolation conditions. In ideal interpolation, the conditions start from the zeroth-order derivative and proceed consecutively upward. Moreover, the total number of conditions matches the dimension of the polynomial space. Typical examples are Lagrange interpolation and Hermite interpolation.
In contrast, non-ideal interpolation breaks the consecutive constraint from order zero. Certain intermediate derivative orders may be skipped. This leads to a non-consecutive configuration of conditions, which makes both regularity analysis and the computation of proper interpolation bases considerably more difficult. As a prototypical example of non-ideal interpolation, the Birkhoff interpolation problem embodies the difficulties described above. In recent years, it has attracted considerable attention from many researchers, leading to a series of important results.

The Birkhoff multivariate interpolation scheme $(Z, S, E)$ consists of three components: the set of interpolation nodes $Z$, the interpolation space $S$, and the incidence matrix $E$. Atkinson and Sharma proposed the Polya's conditions for the incidence matrix corresponding to the multivariate Birkhoff interpolation problem in \cite{wen11}. Bojanov and Xu analyzed the Birkhoff interpolation problem in the two-dimensional case, where the nodes were distributed in concentric circles \cite{wen12}.
Crainic carried out a systematic investigation of multivariate Birkhoff interpolation schemes. For uniform and Cartesian (rectangular) node configurations, they derived necessary and sufficient conditions for the existence and almost regularity of such schemes \cite{21,25}, examined restrictions on the number of mixed derivatives \cite{22}. In \cite{26}, the authors analyzed interpolation schemes that involve only few derivatives.

Dell'Accio and Tommaso considered Hermite--Birkhoff interpolation on scattered data under completeness conditions, using Shepard basis functions combined with local polynomials that interpolate at triangle vertices \cite{Shepard}. Birkhoff interpolation problems are not always solvable, even in appropriate polynomial or rational spaces. The approach of Dell'Accio et al. \cite{data} splits the problem into uniquely solvable polynomial subproblems and uses multinode rational basis functions to construct a global interpolant.
Shepard interpolation has advanced significantly with the development of Shepard operators and now plays a key role in function approximation, surface reconstruction, and related fields. A detailed discussion can be found in the comprehensive survey \cite{27}.  For Birkhoff data, solutions in rational function spaces can likewise be obtained by linearly combining appropriately chosen rational basis functions with polynomials that interpolate on local data \cite{Shepard,data}. An alternative approach is to design a suitable family of basis functions tailored for Birkhoff data \cite{43}, which is particularly well suited to scattered data on the sphere and on other manifolds \cite{44}.

The algorithm proposed by Chai et al. in \cite{lei} allowed the computation of the minimal degree interpolation polynomial for multivariate interpolation problems. 
Building on this foundation, Lei et al. further developed a fast algorithm for computing the minimal monomial basis of multivariate Birkhoff interpolation \cite{1656}. The interpolation problems addressed by these two algorithms involve only monomial interpolation conditions.
Cui and Lei \cite{stable} generalized the multivariate Birkhoff interpolation problem, gave conditions for the existence of solutions, and proposed an algorithm for computing monomial bases that remains stable even when the nodes contain errors. Xia et al. \cite{p.xia} proposed linearization methods for univariate Birkhoff rational interpolation and validated them through numerical experiments. In \cite{C2}, Xia et al. constructed new rational weight functions to raise the smoothness of the weighted patchwise interpolation from $C^2$ to $C^M$ ($2 \leq M \leq 20$). This approach was extended to the rational interpolation setting, yielding better approximation results. The method of \cite{45} studied univariate Birkhoff rational interpolation via parametric linear systems of Cauchy and osculatory rational interpolation. It was later generalized to the multivariate case \cite{46}.

In recent years, Rhouni et al. proposed the RHBPIA algorithm \cite{pia}, which reformulated the univariate Birkhoff interpolation problem via Schur complement theory and employed a generalized recursive strategy to compute the classical univariate Birkhoff interpolation polynomial.  
Subsequently, building on \cite{GRPMIA}, Rhouni et al. further developed the RHBMPIA algorithm \cite{mpia} for solving the Birkhoff matrix polynomial interpolation problem in a general framework. 

In \cite{ideal}, Jiang and Gong proposed the concept of reverse reduced set to compute the  minimal monomial basis  and the reduced Gröbner basis of the associated vanishing ideal for ideal interpolation. This paper generalizes the idea of reverse reduced set to the Birkhoff interpolation. 
Compared with existing methods  \cite{lei,stable} for computing the minimal monomial basis for Birkhoff interpolation, 
our algorithm does not depend on Vandermonde matrices and requires no evaluation of interpolation functionals.
For the single-node case, we prove that the computation of the minimal interpolation basis can be reduced directly to row reduction of the corresponding incidence matrix. In the multi-node case, we use formal power series to convert the interpolation conditions at each nonzero node into conditions at the origin, and apply Gaussian elimination to the truncated monomial coefficients to obtain the minimal monomial interpolation basis with respect to  $\prec$.

The paper is organized as follows. Section~\ref{sec:2} presents the definition of the multivariate Birkhoff interpolation problem together with the necessary preliminaries, including the concept of reverse reduced set and their relevant properties. In Section~\ref{sec:3}, we present a method for computing the minimal monomial basis directly from the interpolation condition polynomials using reverse reduced set. Section~\ref{sec:4} extends the method to the multi-node case and presents an algorithm for computing the minimal monomial interpolation basis w.r.t. $\prec$ for the general multivariate Birkhoff interpolation problem. The effectiveness of the proposed methods is illustrated by concrete examples at the end of Sections~\ref{sec:3} and~\ref{sec:4}.

\section{Preliminary}\label{sec:2}
In this paper, let $\mathbb{F}$ denote either the field of real numbers $\mathbb{R}$ or the field of complex numbers $\mathbb{C}$, let 
$\mathbb{N}$ denote the set of non-negative integers, and let $\mathbb{F}[x_1,x_2,\dots,x_n]$ denote the ring of polynomials in $n$ variables over the field $\mathbb{F}$. The Birkhoff interpolation problem in $n$ variablescan be described as follows:
Given a set of interpolation nodes $Z = \{\boldsymbol{z}_i\}_{i=1}^m = \{(x_{i1}, x_{i2}, \dots, x_{in})\}_{i=1}^m \subset \mathbb{F}^n
$ and a monomial sequence $S = [\boldsymbol{x}^{\boldsymbol{\alpha}_1}, \boldsymbol{x}^{\boldsymbol{\alpha}_2}, \dots, \boldsymbol{x}^{\boldsymbol{\alpha}_l}]$ arranged in a prescribed order $\prec$. For $ i=1,2,\dots,l$, let ${D}^{\boldsymbol{\alpha}_i}$ denote the differential operator associated with the monomial $\boldsymbol{x}^{\boldsymbol{\alpha}_i}$. The incidence matrix is 
\begin{equation}\label{eq:E}
    \boldsymbol{E} = 
\begin{pmatrix}
E_1 \\
E_2 \\
\vdots \\
E_m
\end{pmatrix},
\end{equation}
where $E_i = (e_{jh}^i), \ 1 \leq i \leq m,   j = 1, 2, \dots, s_i,   h = 1, 2, \dots, l$. The matrix has no identically zero rows. That is, $e_{jh}^i \in \mathbb{F}$ denotes the coefficient of the $h$-th monomial (taken in the prescribed order $S$) in the $j$-th interpolation condition at node $\boldsymbol{z}_i$. Let $L_{i,j}$ be the interpolation functional induced by the incidence matrix $\boldsymbol{E}$, i.e.,
\[
L_{i,j} = \delta_{\boldsymbol{z}_i} \circ \sum_{h=1}^l e^i_{jh} D_h, \quad 1 \leq i \leq m, \ 1 \leq j \leq s_i.
\] 
Given arbitrary interpolation values $c_{ij} \in \mathbb{F}$, the Birkhoff
interpolation problem to find a basis such that in the space $\mathcal{P}_S$ spanned by this basis, there exists a unique polynomial $p(\boldsymbol{x})$ satisfying
\begin{equation*}
L_{i,j}(p(\boldsymbol{x})) = \sum_{h=1}^{l} e_{jh}^i D_h p(\boldsymbol{z}_i) = c_{ij}, \quad 1 \leq i \leq m, \ 1 \leq j \leq s_i.
\end{equation*}
Such a basis is called a proper interpolation basis for the interpolation problem, the space $\mathcal{P}_S$ is called the interpolation space, and the polynomial $p(\boldsymbol{x})$ is called the interpolation polynomial. 
\begin{remark}
Here $|N| = \dim \mathcal{P}_S = \sum_{i=1}^{m} s_i$ denotes the total number of rows of the incidence matrix $\boldsymbol{E}$, i.e., the total number of interpolation conditions. The interpolation conditions are uniquely determined by the incidence matrix $\boldsymbol{E}$ and the prescribed graded order $S$. 
\end{remark}

\begin{theorem}(\cite{stable})\label{rank}
The interpolation problem with conditions given by \eqref{eq:E} has a solution if and only if $\operatorname{rank}(E_i)=\operatorname{rank}(E_i,\boldsymbol{c}_i), i=1,2,...,m,$ where $\boldsymbol{c}_i=(c_{i1},c_{i2},\dots,c_{is_i})^{\mathrm{T}}$.
\end{theorem}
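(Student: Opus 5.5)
The plan is to reduce solvability to a statement about one node at a time. The space of unknowns is taken to be all of $\mathbb{F}[x_1,\dots,x_n]$, or a sufficiently large finite-dimensional subspace of it. Fix a node $\boldsymbol{z}_i$. For any polynomial $p$, set $\boldsymbol{d}_i(p)=(D_1p(\boldsymbol{z}_i),\dots,D_lp(\boldsymbol{z}_i))^{\mathrm{T}}\in\mathbb{F}^l$. The conditions at $\boldsymbol{z}_i$ then read
\[
E_i\,\boldsymbol{d}_i(p)=\boldsymbol{c}_i .
\]
So the whole system is solvable if and only if there are vectors $\boldsymbol{d}_1,\dots,\boldsymbol{d}_m$ that satisfy two requirements. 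First, each $\boldsymbol{d}_i$ solves $E_i\boldsymbol{d}_i=\boldsymbol{c}_i$. Second, there is a single polynomial $p$ whose derivative data at $\boldsymbol{z}_i$ equals $\boldsymbol{d}_i$ for every $i$ simultaneously.

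\emph{Necessity.} If $p$ solves the problem, then $\boldsymbol{c}_i=E_i\boldsymbol{d}_i(p)$ lies in the column space of $E_i$. Hence $\operatorname{rank}(E_i)=\operatorname{rank}(E_i,\boldsymbol{c}_i)$ for each $i$. This direction is immediate.

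\emph{Sufficiency.} Suppose the rank conditions hold. By the Rouch\'e--Capelli theorem, each system $E_i\boldsymbol{d}=\boldsymbol{c}_i$ has a solution $\boldsymbol{d}_i$. It remains to realize all the $\boldsymbol{d}_i$ by a single polynomial. This is a Hermite-type (ideal) interpolation problem: prescribe all derivatives $D^{\boldsymbol{\alpha}_h}p(\boldsymbol{z}_i)=d_{i,h}$ for $h=1,\dots,l$ and $i=1,\dots,m$. I would solve it in two steps.
\begin{enumerate}
\item Choose $N$ larger than every $|\boldsymbol{\alpha}_h|$. For each node, build local Taylor polynomials
\[
q_i(\boldsymbol{x})=\sum_{|\boldsymbol{\beta}|\le N} a_{i,\boldsymbol{\beta}}(\boldsymbol{x}-\boldsymbol{z}_i)^{\boldsymbol{\beta}},
\]
with $a_{i,\boldsymbol{\alpha}_h}=d_{i,h}/\boldsymbol{\alpha}_h!$ and all other coefficients zero.
\item Glue the $q_i$ together via the Chinese remainder theorem for the pairwise comaximal ideals $\mathfrak{m}_{\boldsymbol{z}_i}^{N+1}$, where $\mathfrak{m}_{\boldsymbol{z}_i}$ is the maximal ideal of $\boldsymbol{z}_i$. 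The ideals are comaximal because the nodes are distinct, and powers of comaximal ideals remain comaximal. This gives a polynomial $p$ with $p\equiv q_i \pmod{\mathfrak{m}_{\boldsymbol{z}_i}^{N+1}}$ for every $i$.
\end{enumerate}
Every derivative of order at most $N$ at $\boldsymbol{z}_i$ depends only on the class of $p$ modulo $\mathfrak{m}_{\boldsymbol{z}_i}^{N+1}$. Hence $\boldsymbol{d}_i(p)=\boldsymbol{d}_i$ for all $i$, and $p$ solves the Birkhoff problem.

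The main obstacle is this gluing step. The key point is that derivative data at distinct nodes impose independent conditions on polynomials, which the Chinese remainder theorem, or equivalently the nonsingularity of a Hermite interpolation scheme on a large enough polynomial space, delivers. One subtlety is that the theorem states only existence, with no uniqueness and no restriction to the eventual space $\mathcal{P}_S$. I would therefore read it as solvability in $\mathbb{F}[x_1,\dots,x_n]$. Within the paper's framework, projecting $p$ onto a proper interpolation basis then preserves the conditions, since only the values $L_{i,j}(p)$ matter.
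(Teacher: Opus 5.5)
The paper gives no proof of this theorem; it is quoted from \cite{stable}, so there is no in-paper argument to compare against. Your proof is correct and complete under the reading you adopt, namely solvability in $\mathbb{F}[\boldsymbol{x}]$. That is also the only reading under which the statement holds. For $n=1$, $S=[x]$, nodes $0$ and $1$, and $E_1=E_2=(1)$, the rank conditions hold for every data vector, yet no $p\in\operatorname{span}\{x\}$ satisfies $p'(0)=1$, $p'(1)=2$.

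Structurally, you factor the data map $p\mapsto (L_{i,j}(p))$ as $p\mapsto(\boldsymbol{d}_1(p),\dots,\boldsymbol{d}_m(p))$ followed by $\operatorname{diag}(E_1,\dots,E_m)$. Everything then reduces to surjectivity of the first map onto $(\mathbb{F}^l)^m$. That surjectivity is the standard linear independence of the point-derivative functionals $\delta_{\boldsymbol{z}_i}\circ D^{\boldsymbol{\beta}}$ at distinct nodes. Your Chinese-remainder gluing modulo $\mathfrak{m}_{\boldsymbol{z}_i}^{N+1}$ is a clean algebraic proof of exactly this fact.

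Two hypotheses carry the argument and should be stated explicitly rather than left implicit:
\begin{itemize}
\item Step~1 needs $\boldsymbol{\alpha}_1,\dots,\boldsymbol{\alpha}_l$ to be pairwise distinct. This holds because $S$ is strictly ordered by $\prec$.
\item Step~2 needs the nodes to be pairwise distinct.
\end{itemize}
Neither can be dropped, since each failure satisfies the rank conditions while imposing incompatible values on the same derivative. Take $S=[x,x]$ with $E_1=I_2$ and $\boldsymbol{c}_1=(1,2)^{\mathrm{T}}$; or take two coincident nodes carrying $p'(0)=1$ and $p'(0)=2$ respectively.

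Your closing remark about projecting onto a proper interpolation basis is not needed for the theorem. It only makes sense when the $L_{i,j}$ are linearly independent. In that case each $E_i$ has full row rank, so the rank condition holds automatically, and existence in $\mathcal{P}_S$ is already part of the definition of a proper basis.
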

\begin{remark}
The compatibility condition $\operatorname{rank}(E_i) = \operatorname{rank}(E_i, c_i)$ given by the theorem \ref{rank} only guarantees the existence of a solution to the interpolation problem, it does not require the interpolation functionals to be linearly independent. %Here $c_i = (c_{i1}, c_{i2}, \dots, c_{i,s_i})^{\mathrm{T}}$ is the data vector at the node $\boldsymbol{z}_i$. 
Linear independence of the functionals is equivalent to the incidence matrix $E_i$ having full row rank, which is a stronger condition that ensures uniqueness of the solution. We focus on the case of linearly independent interpolation functionals, ensuring a unique interpolation polynomial for any given data. The construction of the monomial basis is studied within this uniqueness framework.
\end{remark}

\begin{definition}
Given a graded order $\prec$ and interpolation conditions $\Delta = \text{span}\{L_1, L_2, \dots, L_n\},$ where the $L_1, L_2, \dots, L_n$ are linearly independent functionals, a set of monomials $T$ is called a minimal monomial basis for $\Delta$ w.r.t. $\prec$ if
\begin{enumerate}
\item[(1)] $T$ is a proper interpolation basis for $\Delta$;
\item[(2)] Replacing any monomial in $T$ by a monomial that is lower in the order $\prec$ can never yield another proper interpolation basis for $\Delta$.
\end{enumerate}
\end{definition}

\begin{example}\label{ex1}
 We consider the bivariate case as an example. Let the set of interpolation nodes be $Z = \{\boldsymbol{z}_1, \boldsymbol{z}_2\} = \{(x_1, y_1), (x_2, y_2)\} \subset \mathbb{R}^2$. Given the sequence $S = [y, y^2, xy]$ ordered by the graded lexicographic order $y \prec_{\mathrm{grlex}} x$, the corresponding interpolation space is denoted by $\mathcal{P}_S$. The incidence matrix $
\boldsymbol{E} = 
\begin{pmatrix}
E_1 \\
E_2
\end{pmatrix},$ where $E_1 = 
\begin{pmatrix}
1 & 0 & 0 \\
1 & 0 & 1
\end{pmatrix},  E_2 = 
\begin{pmatrix}
1 & 0 & 0 \\
4 & 1 & 2
\end{pmatrix}.$ The corresponding sequence of differential operators is given by $D = 
\begin{bmatrix}
\frac{\partial}{\partial y}, \frac{\partial^2}{\partial y^2}, \frac{\partial^2}{\partial x \partial y}
\end{bmatrix}.$ Let $L_{1,1}, L_{1,2}, L_{2,1}, L_{2,2}$ denote the interpolation functional determined by the conditions above. Given arbitrary interpolation values $\{c_{11}, c_{12}, c_{21}, c_{22}\}$, the Birkhoff
interpolation problem is to find a basis for the interpolation space $\mathcal{P}_S$ such that there exists a unique polynomial $p(\boldsymbol{x}) \in \mathcal{P}_S$ satisfying the interpolation conditions
\begin{align*}
L_{1,1}(p(\boldsymbol{x})) &= 1 \cdot \frac{\partial}{\partial y} p(\boldsymbol{z}_1) = c_{11}, \\
L_{1,2}(p(\boldsymbol{x})) &= 1 \cdot \frac{\partial}{\partial y} p(\boldsymbol{z}_1) + 1 \cdot \frac{\partial^2}{\partial x \partial y} p(\boldsymbol{z}_1) = c_{12}, \\
L_{2,1}(p(\boldsymbol{x})) &= 1 \cdot \frac{\partial}{\partial y} p(\boldsymbol{z}_2) = c_{21}, \\
L_{2,2}(p(\boldsymbol{x})) &= 4 \cdot \frac{\partial}{\partial y} p(\boldsymbol{z}_2) + 1 \cdot \frac{\partial^2}{\partial y^2} p(\boldsymbol{z}_2) + 2 \cdot \frac{\partial^2}{\partial x \partial y} p(\boldsymbol{z}_2) = c_{22}.
\end{align*}
\end{example}
For computational convenience, we denote the interpolation conditions determined by the incidence matrix $\boldsymbol{E}$ and the sequence $S$ in Example~$\ref{ex1}$ as follows:
\[
\Delta = 
\begin{cases} 
\delta_{\boldsymbol{z}_1} \circ \text{span}\{D_y, D_y + D_x D_y\}, \\
\delta_{\boldsymbol{z}_2} \circ \text{span}\{D_y, 4D_y + D_y^2 + 2D_x D_y\}.
\end{cases}
\]

\begin{definition}[Multi-index factorial]
For the general $n$-variate case, let $\boldsymbol{\alpha} = (\alpha_1, \alpha_2, \dots, \alpha_n) \in \mathbb{N}^n$ be a multi-index. The factorial of $\boldsymbol{\alpha}$ is defined as the product of the factorials of its components:
\[
\boldsymbol{\alpha}! = \alpha_1! \, \alpha_2! \cdots \alpha_n! = \prod_{i=1}^n \alpha_i!.
\]
The length of $\boldsymbol{\alpha}$ is defined by
$|\boldsymbol{\alpha}| = \alpha_1 + \alpha_2 + \cdots + \alpha_n.$
The differential operator $D^{\boldsymbol{\alpha}}$ associated with $\boldsymbol{\alpha}$ is given by
\[
D^{\boldsymbol{\alpha}} f = \frac{\partial^{|\boldsymbol{\alpha}|} f}{\partial x_1^{\alpha_1} \partial x_2^{\alpha_2} \cdots \partial x_n^{\alpha_n}}.
\]
\end{definition}

Let $\mathbb{F}[[\boldsymbol{x}]]$ denote the ring of formal power series in the variables $x_1,x_2, \dots, x_n$, i.e., \[
\mathbb{F}[[\boldsymbol{x}]] := \mathbb{F}[x_1, x_2, \dots, x_n] = \left\{ \sum_{\boldsymbol{\alpha} \in \mathbb{N}^n} c_{\boldsymbol{\alpha}} \boldsymbol{x}^{\boldsymbol{\alpha}} : c_{\boldsymbol{\alpha}} \in \mathbb{F} \right\},
\]
where \(\boldsymbol{\alpha} = (\alpha_1, \alpha_2, \dots, \alpha_n)\), \(\boldsymbol{x} ^{\boldsymbol{\alpha} } = x_1^{\alpha_1} x_2^{\alpha_2} \cdots x_n^{\alpha_n}\). For any $\lambda \in \mathbb{F}[[\boldsymbol{x}]]$, define
\[
\lambda * f := (\lambda(D)f)(\boldsymbol{0}) = \sum_{\boldsymbol{\alpha} \in \mathbb{N}^n} \frac{D^{\boldsymbol{\alpha}} \lambda(\boldsymbol{0}) \cdot D^{\boldsymbol{\alpha}} f(\boldsymbol{0})}{\boldsymbol{\alpha}!}, \quad \forall f \in \mathbb{F}[\boldsymbol{x}].
\]
Taking $\lambda = e^{\boldsymbol{x} \cdot \boldsymbol{z}} \in \mathbb{F}[[\boldsymbol{x}]]$, we have
\[
e^{\boldsymbol{x} \cdot \boldsymbol{z}} * f = \sum_{\boldsymbol{\alpha} \in \mathbb{N}^n} \frac{D^{\boldsymbol{\alpha}} f(\boldsymbol{0})}{\boldsymbol{\alpha}!} \boldsymbol{z}^{\boldsymbol{\alpha}} = f(\boldsymbol{z}) = \delta_{\boldsymbol{z}} f, \quad \forall f \in \mathbb{F}[\boldsymbol{x}].
\] 
It is easy to verify that the one-to-one correspondence between \( (\mathbb{F}[\boldsymbol{x}])' \) and \( \mathbb{F}[[\boldsymbol{x}]] \) is as shown in Table \ref{biao1}.
%\newpage
\begin{table}[width=.9\linewidth,cols=6,pos=h]
\caption{the one-to-one correspondence between \( (\mathbb{F}[\boldsymbol{x}])' \) and \( \mathbb{F}[[\boldsymbol{x}]] \)}\label{biao1}
\begin{tabular*}{\tblwidth}{@{} LLLLLL@{} }
\toprule
\(\hat{\lambda} \in (\mathbb{F}[\boldsymbol{x}])'\) & 
\(\delta_{\boldsymbol{z}}\) & 
\(\delta_{\boldsymbol{0}}\) & 
\(\delta_{\boldsymbol{z}} \circ D_x D_y\) & 
\(\delta_{\boldsymbol{0}} \circ D_x\) & 
\(\cdots\) \\
\midrule
\(\lambda \in \mathbb{F}[[\boldsymbol{x}]]\) & 
\(e^{\boldsymbol{x} \cdot \boldsymbol{z}}\) & 
\(1\) & 
\(xy e^{\boldsymbol{x} \cdot \boldsymbol{z}}\) & 
\(x\) & 
\(\cdots\) \\
\bottomrule
\end{tabular*}
\end{table}

For example, \(\hat{\lambda}_1 = \delta_{(1,2)} \circ D_y^2\) can be represented as \(\lambda_1 = y^2e^{x+2y}\), which yields \(\lambda_1(D) = e^{D_x + 2D_y} D_y^2\). 
\begin{prop}
With the above notations,
\begin{equation}\label{eq2}
\delta_{\boldsymbol{z}} \circ P(D) = \delta_{\boldsymbol{0}} \circ e^{\boldsymbol{z}D} P(D).
\end{equation}
\end{prop}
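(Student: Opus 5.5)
The identity \eqref{eq2} is a statement about linear functionals on $\mathbb{F}[\boldsymbol{x}]$. Since both sides are linear in the argument, it suffices to check it on a single polynomial $f$. Moreover, $P(D)f$ is again a polynomial, so I would first reduce to the case $P=1$ by setting $g:=P(D)f\in\mathbb{F}[\boldsymbol{x}]$. Here $e^{\boldsymbol{z}D}$ means the formal power series $e^{\boldsymbol{z}\cdot\boldsymbol{x}}$ evaluated at $D$, i.e.\ $\sum_{\boldsymbol{\alpha}}\frac{\boldsymbol{z}^{\boldsymbol{\alpha}}}{\boldsymbol{\alpha}!}D^{\boldsymbol{\alpha}}$. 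Applied to a polynomial, this is a finite sum, because $D^{\boldsymbol{\alpha}}g=0$ once $|\boldsymbol{\alpha}|>\deg g$. The operators $e^{\boldsymbol{z}D}$ and $P(D)$ are both polynomials or power series in the commuting symbols $D_{x_1},\dots,D_{x_n}$, so they commute and compose as products of power series. Hence $e^{\boldsymbol{z}D}P(D)f=e^{\boldsymbol{z}D}g$.

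The key step is Taylor's formula at the origin for a polynomial:
\[
g(\boldsymbol{z})=\sum_{\boldsymbol{\alpha}\in\mathbb{N}^n}\frac{D^{\boldsymbol{\alpha}}g(\boldsymbol{0})}{\boldsymbol{\alpha}!}\boldsymbol{z}^{\boldsymbol{\alpha}}.
\]
This is exactly the computation $e^{\boldsymbol{x}\cdot\boldsymbol{z}}*g=\delta_{\boldsymbol{z}}g$ already recorded before Table~\ref{biao1}. To justify it, I would verify it on monomials $g=\boldsymbol{x}^{\boldsymbol{\beta}}$. There $D^{\boldsymbol{\alpha}}\boldsymbol{x}^{\boldsymbol{\beta}}(\boldsymbol{0})=\boldsymbol{\beta}!\,\delta_{\boldsymbol{\alpha}\boldsymbol{\beta}}$, so the right-hand side collapses to $\boldsymbol{z}^{\boldsymbol{\beta}}$, and the general case follows by linearity. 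Reading the right-hand side as $\bigl(\sum_{\boldsymbol{\alpha}}\frac{\boldsymbol{z}^{\boldsymbol{\alpha}}}{\boldsymbol{\alpha}!}D^{\boldsymbol{\alpha}}g\bigr)(\boldsymbol{0})=\delta_{\boldsymbol{0}}\circ e^{\boldsymbol{z}D}g$ gives
\[
\delta_{\boldsymbol{z}}\circ P(D)f=\delta_{\boldsymbol{z}}g=\delta_{\boldsymbol{0}}\circ e^{\boldsymbol{z}D}g=\delta_{\boldsymbol{0}}\circ e^{\boldsymbol{z}D}P(D)f,
\]
which is \eqref{eq2}.

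The main obstacle is not computational but one of rigor. The composition $e^{\boldsymbol{z}D}P(D)$ involves an infinite series, so I must make sure it is well defined as an operator on $\mathbb{F}[\boldsymbol{x}]$ and that it agrees with the product of the corresponding formal power series. I would handle this with the degree-truncation remark above: on any fixed polynomial, only finitely many terms act nontrivially, so every rearrangement is of a finite sum. Equivalently, in the language of Table~\ref{biao1}, the proposition says that the functional $\delta_{\boldsymbol{z}}\circ P(D)$ corresponds to the series $P(\boldsymbol{x})e^{\boldsymbol{x}\cdot\boldsymbol{z}}$ under $\lambda\mapsto\lambda*\cdot$. That is consistent with the example $\lambda_1=y^2e^{x+2y}$ and gives a useful sanity check.
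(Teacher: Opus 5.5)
Your proposal is correct and takes essentially the same route as the paper: both identify the two sides with the pairing of $f$ against the series $P(\boldsymbol{x})e^{\boldsymbol{x}\cdot\boldsymbol{z}}$, which amounts to applying Taylor's formula $e^{\boldsymbol{x}\cdot\boldsymbol{z}}*g=g(\boldsymbol{z})$ to $g=P(D)f$. Your version is more complete than the paper's, which simply asserts $\delta_{\boldsymbol{z}}\circ P(D)(f)=P(\boldsymbol{x})e^{\boldsymbol{x}\cdot\boldsymbol{z}}*f$; you supply the check on monomials and the degree-truncation argument that makes the composition $e^{\boldsymbol{z}D}P(D)$ well defined.
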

\begin{proof}
    For any polynomial $f$, \[
\delta_{\boldsymbol{z}} \circ P(D)(f) = P(\boldsymbol{x})e^{\boldsymbol{x}\cdot\boldsymbol{z}} * f ,
\]
\[
\delta_{\boldsymbol{0}} \circ e^{zD}P(D)(f) = e^{\boldsymbol{x}\cdot\boldsymbol{z}}P(\boldsymbol{x}) * f ,
\]
this indicates that $\delta_{\boldsymbol{z}} \circ P(D)(f) = \delta_{\boldsymbol{0}} \circ e^{\boldsymbol{z}D}P(D)(f),$
which completes the proof.
\end{proof}

\section{Computing the proper interpolation basis for single-node Birkhoff interpolation}\label{sec:3}

The single-node Birkhoff interpolation problem can be stated more concisely as follows: Denote the interpolation node by $\boldsymbol{z}$, given a sequence of monomials $[\boldsymbol{x}^{\boldsymbol{\alpha}_1}, \boldsymbol{x}^{\boldsymbol{\alpha}_2}, \dots, \boldsymbol{x}^{\boldsymbol{\alpha}_l}]$ ordered according to a graded order $\prec$, denote by $D^{\boldsymbol{\alpha}_h}$ ($h = 1, 2, \dots, l$) the differential operator associated with $\boldsymbol{x}^{\boldsymbol{\alpha}_h}$. Suppose that there are $N$ linearly independent interpolation conditions. Then the associated incidence matrix, as defined in (\ref{eq:E}), takes the form $E = (e_{jh})_{N \times l}, \  e_{jh} \in \mathbb{R}.$ For each $j = 1, 2, \dots, N$, let $L_j$ denote the interpolation functional determined by the conditions above, i.e.,\[
L_j = \delta_{\boldsymbol{z}} \circ \sum_{h=1}^l e_{jh} D^{\boldsymbol{\alpha}_h}, \quad 1 \leq j \leq N.
\]
For any given interpolation values $c_1, c_2, \dots, c_N$, find an interpolation polynomial $p(\boldsymbol{x})$ such that
\begin{equation*}
 L_j(p(\boldsymbol{x})) = \sum_{h=1}^l e_{jh} D^{\boldsymbol{\alpha}_h} p(\boldsymbol{z}) = c_j,\ 1\leq j\leq N.
\end{equation*}
To simplify notation, the interpolation conditions can be expressed as
\begin{equation}
\Delta = \delta_{\boldsymbol{z}} \circ \operatorname{span}\{P_1(D), P_2(D), \dots, P_N(D)\}, \label{eq:2.3}
\end{equation}
where $P_j(\boldsymbol{x}) = \sum_{h=1}^{l} e_{jh} \boldsymbol{x}^{\boldsymbol{\alpha}_h} \in \mathbb{F}[\boldsymbol{x}]$, $j = 1, 2, \dots, N$, are $N$ linearly independent polynomials.

\begin{definition}
The support of the polynomials $P_1(\boldsymbol{x}), P_2(\boldsymbol{x}), \dots, P_N(\boldsymbol{x})$, denoted by $\Lambda\{P_1(\boldsymbol{x}), P_2(\boldsymbol{x}), \dots, P_N(\boldsymbol{x})\}$, is the set of all monomials appearing with nonzero coefficients in $P_1(\boldsymbol{x}),P_2({\boldsymbol{x}}) \dots, P_N(\boldsymbol{x})$.

\end{definition}
For example, taking $P_1(\boldsymbol{x}) = y + y^2 + xy + 2y^3$ and $P_2(\boldsymbol{x}) = 4y + 5y^2 + 6x^2y + xy^2$, we have
\[
\Lambda\{P_1(\boldsymbol{x}), P_2(\boldsymbol{x})\} = \{y, y^2, xy, y^3, xy^2, x^2y\}.
\]

\begin{definition}
Given an arbitrary graded order $\prec$, a set of linearly independent polynomials
\[
\{P_1(\boldsymbol{x}), P_2(\boldsymbol{x}), \dots, P_N(\boldsymbol{x})\} \subset \mathbb{F}[\boldsymbol{x}]
\]
is called a reverse reduced set w.r.t. $\prec$ if the following conditions hold:
\begin{enumerate}
    \item For each $i = 1, 2, \dots, N$, let $\operatorname{lm}(P_i(\boldsymbol{x}))$ denote the least monomial of $P_i(\boldsymbol{x})$ w.r.t. $\prec$, and assume that its coefficient is~$1$;
    
    \item For any distinct indices $i, j \in \{1, 2, \dots, N\}$, the monomial $\operatorname{lm}(P_i(\boldsymbol{x}))$ does not appear in $P_j(\boldsymbol{x})$, i.e.,
    \[
    \operatorname{lm}(P_i(\boldsymbol{x})) \notin \Lambda\{P_j(\boldsymbol{x})\}, \quad i \neq j.
    \]
\end{enumerate}
\end{definition}
For example, with respect to the graded lexicographic order $y \prec_{\mathrm{grlex}} x$, both
\[
\{1,\ y + xy,\ x\}, \quad \{y,\ y^2 + y^3,\ x^2 + xy\} 
\]
are reverse reduced sets.

\begin{lemma}
\label{lem:support_intersection}
Consider the interpolation conditions of a Birkhoff interpolation problem at the origin:
\[
\Delta_{\boldsymbol{0}} = \delta_{\boldsymbol{0}} \circ \operatorname{span}\{P_1(D), P_2(D), \dots, P_N(D)\},
\]
where $P_1(\boldsymbol{x}), P_2(\boldsymbol{x}), \dots, P_N(\boldsymbol{x})$ are linearly independent polynomials in $\mathbb{F}[\boldsymbol{x}]$. Let $T = \{\boldsymbol{x}^{\boldsymbol{\beta}_1}, \boldsymbol{x}^{\boldsymbol{\beta}_2}, \dots, \boldsymbol{x}^{\boldsymbol{\beta}_N}\}$ be an interpolation monomial basis for $\Delta_{\boldsymbol{0}}$. Then for each $i = 1, 2, \dots, N$, the support $\Lambda\{P_i(\boldsymbol{x})\}$ has a nonempty intersection with $T$.
\end{lemma}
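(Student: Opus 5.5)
The plan is a proof by contradiction using the explicit action of a functional $\delta_{\boldsymbol{0}} \circ P(D)$ on monomials. First record the basic identity: for $P(\boldsymbol{x}) = \sum_{\boldsymbol{\alpha}} c_{\boldsymbol{\alpha}} \boldsymbol{x}^{\boldsymbol{\alpha}}$ and any monomial $\boldsymbol{x}^{\boldsymbol{\beta}}$,
\[
\delta_{\boldsymbol{0}} \circ P(D)\,(\boldsymbol{x}^{\boldsymbol{\beta}}) = \sum_{\boldsymbol{\alpha}} c_{\boldsymbol{\alpha}}\, D^{\boldsymbol{\alpha}}\boldsymbol{x}^{\boldsymbol{\beta}}\big|_{\boldsymbol{x}=\boldsymbol{0}} = c_{\boldsymbol{\beta}}\, \boldsymbol{\beta}!,
\]
since $D^{\boldsymbol{\alpha}}\boldsymbol{x}^{\boldsymbol{\beta}}$ evaluated at the origin vanishes unless $\boldsymbol{\alpha}=\boldsymbol{\beta}$, in which case it equals $\boldsymbol{\beta}!$. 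This is the same pairing as $P * \boldsymbol{x}^{\boldsymbol{\beta}}$ from the preliminaries. Consequently, $\delta_{\boldsymbol{0}}\circ P_i(D)$ annihilates $\boldsymbol{x}^{\boldsymbol{\beta}}$ exactly when $\boldsymbol{x}^{\boldsymbol{\beta}} \notin \Lambda\{P_i(\boldsymbol{x})\}$.

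Next, suppose for contradiction that for some index $i$ we have $\Lambda\{P_i(\boldsymbol{x})\} \cap T = \emptyset$. Then $\delta_{\boldsymbol{0}}\circ P_i(D)$ vanishes on every monomial of $T$, and by linearity on all of $\operatorname{span} T$. Consider the generalized Vandermonde (collocation) matrix
\[
V = \bigl( \delta_{\boldsymbol{0}}\circ P_j(D)\,(\boldsymbol{x}^{\boldsymbol{\beta}_k}) \bigr)_{j,k=1}^{N}.
\]
Its $i$-th row is identically zero, so $V$ is singular.

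Finally, use that $T$ is a proper interpolation basis. This means that for all data $c_1,\dots,c_N$ there is a unique $p\in\operatorname{span}T$ with $\delta_{\boldsymbol{0}}\circ P_j(D)(p)=c_j$, which is equivalent to $V$ being invertible. Alternatively, argue directly: taking $c_i = 1$ and the other $c_j = 0$ gives data that no $p \in \operatorname{span}T$ can match, since every such $p$ satisfies $\delta_{\boldsymbol{0}}\circ P_i(D)(p)=0$. This contradicts existence of an interpolant, so every $\Lambda\{P_i(\boldsymbol{x})\}$ meets $T$.

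The main point needing care is the identity in the first step: the normalization $\boldsymbol{\beta}!$ must be nonzero, which holds over $\mathbb{R}$ or $\mathbb{C}$, and we must confirm that no other term of $P_i$ contributes at the origin. Linear independence of the $P_j$ is not even needed for this lemma; only the existence part of the definition of a proper basis is used.
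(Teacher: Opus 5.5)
Your proposal is correct and follows essentially the same route as the paper. The paper also computes $\delta_{\boldsymbol{0}}\circ P_i(D)\,\boldsymbol{x}^{\boldsymbol{\beta}_j}=e_{i\boldsymbol{\beta}_j}\,\boldsymbol{\beta}_j!$ and concludes that an empty intersection $\Lambda\{P_{i_0}(\boldsymbol{x})\}\cap T$ produces a zero row in the Vandermonde matrix, contradicting its nonsingularity; your remark that only the existence half of properness is needed (not linear independence of the $P_j$) is a valid minor sharpening.
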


\begin{proof}
We proceed by contradiction. Suppose that $T = \{\boldsymbol{x}^{\boldsymbol{\beta}_1}, \boldsymbol{x}^{\boldsymbol{\beta}_2}, \dots, \boldsymbol{x}^{\boldsymbol{\beta}_N}\}$ is an interpolation monomial basis for $\Delta_{\boldsymbol{0}}$. Then the Vandermonde matrix
\[
M := \bigl( \delta_{\boldsymbol{0}} \circ P_i(D) \, \boldsymbol{x}^{\boldsymbol{\beta}_j} \bigr)_{1 \leq i,j \leq N}
\]
is nonsingular. Write each polynomial $P_i(\boldsymbol{x})$ ($1 \leq i \leq N$) as
\[
P_i(\boldsymbol{x}) = \sum_{\boldsymbol{x}^{\boldsymbol{\alpha}} \in \Lambda(P_i(\boldsymbol{x}))} e_{i\boldsymbol{\alpha}} \, \boldsymbol{x}^{\boldsymbol{\alpha}}, \qquad e_{i\boldsymbol{\alpha}} \in \mathbb{F},\ e_{i\boldsymbol{\alpha}} \neq 0.
\]
Then for any $\boldsymbol{x}^{\boldsymbol{\beta}_j} \in T$, we have
\[
\delta_{\boldsymbol{0}} \circ P_i(D) \, \boldsymbol{x}^{\boldsymbol{\beta}_j} = \sum_{\boldsymbol{x}^{\boldsymbol{\alpha}} \in \Lambda(P_i(\boldsymbol{x}))} e_{i\boldsymbol{\alpha}} \,\bigl( \delta_{\boldsymbol{0}} \circ D^{\boldsymbol{\alpha}} \, \boldsymbol{x}^{\boldsymbol{\beta}_j} \bigr).\]
Note that
\[
\delta_{\boldsymbol{0}} \circ D^{\boldsymbol{\alpha}} \, \boldsymbol{x}^{\boldsymbol{\beta}_j} =
\begin{cases}
\boldsymbol{\alpha}!, & \boldsymbol{\alpha} = \boldsymbol{\beta}_j, \\[4pt]
0, & \boldsymbol{\alpha} \neq \boldsymbol{\beta}_j.
\end{cases}
\]
%where $\boldsymbol{\alpha}! = \alpha_1! \, \alpha_2! \cdots \alpha_n!$. 
Consequently,
\[
\delta_{\boldsymbol{0}} \circ P_i(D) \, \boldsymbol{x}^{\boldsymbol{\beta}_j} = e_{i\boldsymbol{\beta}_j} \cdot \boldsymbol{\beta}_j!,
\]
with the convention that $e_{i\boldsymbol{\beta}_j} = 0$ if $\boldsymbol{\beta}_j \notin \Lambda\{P_i(\boldsymbol{x})\}$. Now suppose that there exists some $i_0$ such that
\[
\Lambda\{P_{i_0}(\boldsymbol{x})\} \cap T = \emptyset,
\]
i.e., $\boldsymbol{x}^{\boldsymbol{\beta}_j} \notin \Lambda\{P_{i_0}(\boldsymbol{x})\}$ for all $j = 1, 2, \dots, N$. Then $\delta_{\boldsymbol{0}} \circ P_{i_0}(D) \, \boldsymbol{x}^{\boldsymbol{\beta}_j} = 0$ for every $j$, which means that the $i_0$-th row of the matrix $M$ consists entirely of zeros. Hence $M$ is singular, contradicting the assumption that $T$ is an interpolation monomial basis for $\Delta_{\boldsymbol{0}}$.  
\end{proof}

The above lemma shows that the support of each $P_i(\boldsymbol{x})$ must have a nonempty intersection with the interpolation basis $T$.

For the single-node interpolation problem, suppose that the node is taken to be the origin and the corresponding interpolation basis has been determined. If the node is shifted to a nonzero point $\boldsymbol{z}$, the original interpolation basis may no longer constitute an interpolation basis for the new node. For example, consider the univariate interpolation problem with interpolation conditions
\[
\{\delta_z \circ (D_x - 1),\ \delta_z \circ D_x^2\},
\]
and take $T = \{x, x^2\}$.

When $z = 0$, it is easy to verify that the Vandermonde matrix is
\[
M = \begin{pmatrix}
1 & 0 \\
0 & 2
\end{pmatrix},
\]
which is clearly nonsingular. Hence $T$ is an interpolation monomial basis. However, when $z = 1$, the Vandermonde matrix becomes
\[
M = \begin{pmatrix}
0 & 1 \\
0 & 2
\end{pmatrix},
\]
which is singular. Consequently, $T$ no longer forms an interpolation monomial basis at this node.

If the interpolation node is not the origin, Lemma~\ref{lem:support_intersection} does not necessarily hold. For example, take the univariate node $z = 1$ with the interpolation conditions
\[
\delta_z \circ P_1(D),\quad \delta_z \circ P_2(D),
\]
where $P_1(x) = x$ and $P_2(x) = x^2$. Let $T = \{x^2, x^3\}$. Then the matrix
\[
M = \begin{pmatrix}
2 & 3 \\
2 & 6
\end{pmatrix}
\]
is nonsingular, so $T$ is an interpolation monomial basis. However, the support $\Lambda\{P_1(x)\}$ contains the monomial $x$, which does not belong to $T$.

Any given set of linearly independent polynomials $\{P_1(\boldsymbol{x}), P_2(\boldsymbol{x}), \dots, P_N(\boldsymbol{x})\}$ can be transformed by Gaussian elimination into an equivalent reverse reduced set $\{R_1(\boldsymbol{x}), R_2(\boldsymbol{x}), \dots, R_N(\boldsymbol{x})\}$. Note that each $R_i(\boldsymbol{x})$ is a linear combination of $P_1(\boldsymbol{x}), P_2(\boldsymbol{x}), \dots, P_N(\boldsymbol{x})$, hence
\[
\operatorname{span}\{P_1(\boldsymbol{x}), P_2(\boldsymbol{x}), \dots, P_N(\boldsymbol{x})\} = \operatorname{span}\{R_1(\boldsymbol{x}), R_2(\boldsymbol{x}), \dots, R_N(\boldsymbol{x})\}.
\]
Therefore, any given single-node Birkhoff interpolation problem (\ref{eq:2.3}) can be transformed into the equivalent form
\[
\Delta = \delta_{\boldsymbol{z}} \circ \operatorname{span}\{R_1(D), R_2(D), \dots, R_N(D)\},
\]
where $\{R_1(\boldsymbol{x}), R_2(\boldsymbol{x}), \dots, R_N(\boldsymbol{x})\}$ is a reverse reduced set.

For an arbitrary interpolation node, the analogue of Lemma~\ref{lem:support_intersection} is postponed to two corollaries below. We now establish the main result on the minimal monomial basis for the single-node Birkhoff interpolation problem.

\begin{theorem}
\label{thm:minimal_basis}%\ref{thm:minimal_basis}
For an arbitrary interpolation node $\boldsymbol{z} \in \mathbb{F}^n$, let a graded order $\prec$ and the Birkhoff interpolation conditions
\[
\Delta_{\boldsymbol{z}} = \delta_{\boldsymbol{z}} \circ \operatorname{span}\{P_1(D), P_2(D), \dots, P_N(D)\}
\]
be given. If $\{P_1(\boldsymbol{x}), P_2(\boldsymbol{x}), \dots, P_N(\boldsymbol{x})\}$ is a reverse reduced set w.r.t. $\prec$, then 
$\{ \operatorname{lm}(P_1(\boldsymbol{x})),\operatorname{lm}(P_2(\boldsymbol{x})),\dots, \operatorname{lm}(P_N(\boldsymbol{x}))\}$
%$\operatorname{lm}(P_j(\boldsymbol{x}))$ for $j = 1, 2, \dots, N$ 
forms a minimal monomial basis for $\Delta_{\boldsymbol{z}}$.
\end{theorem}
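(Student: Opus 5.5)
The plan is to compute the generalized Vandermonde matrix of the functionals $\delta_{\boldsymbol{z}}\circ P_i(D)$ against the candidate monomials directly, and show it has a triangular structure. Write $\boldsymbol{x}^{\boldsymbol{\beta}_i}=\operatorname{lm}(P_i(\boldsymbol{x}))$ and relabel so that $\boldsymbol{x}^{\boldsymbol{\beta}_1}\prec\boldsymbol{x}^{\boldsymbol{\beta}_2}\prec\cdots\prec\boldsymbol{x}^{\boldsymbol{\beta}_N}$. These monomials are pairwise distinct: if $\operatorname{lm}(P_i)=\operatorname{lm}(P_j)$ with $i\neq j$, then $\operatorname{lm}(P_i)\in\Lambda\{P_j\}$, which contradicts condition~2 of a reverse reduced set.

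The basic computation is
\[
\delta_{\boldsymbol{z}}\circ D^{\boldsymbol{\alpha}}\,\boldsymbol{x}^{\boldsymbol{\gamma}}=\frac{\boldsymbol{\gamma}!}{(\boldsymbol{\gamma}-\boldsymbol{\alpha})!}\,\boldsymbol{z}^{\boldsymbol{\gamma}-\boldsymbol{\alpha}}\quad\text{if }\boldsymbol{\alpha}\le\boldsymbol{\gamma}\text{ componentwise},\qquad \delta_{\boldsymbol{z}}\circ D^{\boldsymbol{\alpha}}\,\boldsymbol{x}^{\boldsymbol{\gamma}}=0\quad\text{otherwise}.
\]
Here the value is taken at $\boldsymbol{z}$, so the entry for $\boldsymbol{\alpha}\le\boldsymbol{\gamma}$ need not vanish. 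The key fact is that $\boldsymbol{x}^{\boldsymbol{\alpha}}\mid\boldsymbol{x}^{\boldsymbol{\gamma}}$ implies $\boldsymbol{x}^{\boldsymbol{\alpha}}\preceq\boldsymbol{x}^{\boldsymbol{\gamma}}$, which I will take from $\prec$ being a graded monomial order. Every monomial in $\Lambda\{P_i\}$ is $\succeq\boldsymbol{x}^{\boldsymbol{\beta}_i}$. Hence $\delta_{\boldsymbol{z}}\circ P_i(D)\,\boldsymbol{x}^{\boldsymbol{\gamma}}\neq 0$ forces $\boldsymbol{x}^{\boldsymbol{\beta}_i}\preceq\boldsymbol{x}^{\boldsymbol{\gamma}}$.

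\emph{Proper basis.} Let $M=(\delta_{\boldsymbol{z}}\circ P_i(D)\,\boldsymbol{x}^{\boldsymbol{\beta}_j})_{i,j}$. By the key fact, $M_{ij}=0$ unless $\boldsymbol{\beta}_i\preceq\boldsymbol{\beta}_j$, that is, unless $i\le j$, so $M$ is upper triangular. On the diagonal, the only $\boldsymbol{\alpha}\in\Lambda\{P_i\}$ that contributes must satisfy both $\boldsymbol{x}^{\boldsymbol{\alpha}}\mid\boldsymbol{x}^{\boldsymbol{\beta}_i}$ and $\boldsymbol{x}^{\boldsymbol{\alpha}}\succeq\boldsymbol{x}^{\boldsymbol{\beta}_i}$. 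This forces $\boldsymbol{\alpha}=\boldsymbol{\beta}_i$, and that term has coefficient $1$, so $M_{ii}=\boldsymbol{\beta}_i!\neq0$. Therefore $M$ is nonsingular and $T=\{\boldsymbol{x}^{\boldsymbol{\beta}_1},\dots,\boldsymbol{x}^{\boldsymbol{\beta}_N}\}$ is a proper interpolation basis, for every node $\boldsymbol{z}$.

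\emph{Minimality.} Replace $\boldsymbol{x}^{\boldsymbol{\beta}_k}$ by a monomial $\boldsymbol{x}^{\boldsymbol{\gamma}}\prec\boldsymbol{x}^{\boldsymbol{\beta}_k}$. If $\boldsymbol{x}^{\boldsymbol{\gamma}}$ already lies in $T$, the new set has fewer than $N$ elements and is not a basis, so assume it does not. Consider the new matrix $M'$ and its rows $i\ge k$.
\begin{itemize}
\item In the column of $\boldsymbol{x}^{\boldsymbol{\gamma}}$, a nonzero entry needs $\boldsymbol{x}^{\boldsymbol{\beta}_i}\preceq\boldsymbol{x}^{\boldsymbol{\gamma}}\prec\boldsymbol{x}^{\boldsymbol{\beta}_k}$, so $i<k$. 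Thus rows $i\ge k$ are zero in this column.
\item In a column $\boldsymbol{x}^{\boldsymbol{\beta}_j}$ with $j<k$, a nonzero entry needs $i\le j<k$. Thus rows $i\ge k$ are zero in these columns too.
\end{itemize}
So the $N-k+1$ rows $k,\dots,N$ are supported only on the $N-k$ columns $\boldsymbol{\beta}_{k+1},\dots,\boldsymbol{\beta}_N$. They are linearly dependent, $M'$ is singular, and no such replacement gives a proper basis.

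The main point needing care is the key fact that divisibility implies $\preceq$ for the graded order. I will justify it from $\prec$ being a monomial order (multiplicative, with $1$ minimal); if only "graded" is assumed, I would state this compatibility explicitly. Note also that, unlike Lemma~\ref{lem:support_intersection}, nothing here requires $\boldsymbol{z}=\boldsymbol{0}$: the vanishing pattern depends only on divisibility, and the diagonal entries come from $\boldsymbol{\alpha}=\boldsymbol{\beta}_i$ alone, for which $\boldsymbol{z}^{\boldsymbol{0}}=1$.
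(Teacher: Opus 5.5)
Your proof is correct and is essentially the paper's argument. Properness comes from the same upper triangular matrix with diagonal entries $\boldsymbol{\beta}_i!\neq 0$, and minimality from the same vanishing of rows $k,\dots,N$ in the replaced column; the paper phrases this as a zero diagonal entry in a still-triangular matrix. The only difference is that you evaluate $\delta_{\boldsymbol{z}}\circ D^{\boldsymbol{\alpha}}\boldsymbol{x}^{\boldsymbol{\gamma}}$ directly, whereas the paper passes through $\delta_{\boldsymbol{0}}\circ e^{\boldsymbol{x}\cdot\boldsymbol{z}}P_i(D)$, which is the same computation.

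Your hedge about the key fact is unnecessary. A proper divisor of $\boldsymbol{x}^{\boldsymbol{\gamma}}$ has strictly smaller total degree, so gradedness of $\prec$ alone already gives divisibility $\Rightarrow$ $\preceq$.
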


\begin{proof}
For the polynomials $P_j(\boldsymbol{x})$ ($1 \leq j \leq N$), without loss of generality we may assume that
\[
\operatorname{lm}(P_1(\boldsymbol{x})) \prec \operatorname{lm}(P_2(\boldsymbol{x})) \prec \cdots \prec \operatorname{lm}(P_N(\boldsymbol{x})).
\]
By formula (\ref{eq2}), we have $\delta_{\boldsymbol{z}} \circ P_j(D) = \delta_{\boldsymbol{0}} \circ e^{z D} P_j(D)$. Setting $\tilde{P}_j({\boldsymbol{x}}) = e^{\boldsymbol{x} \cdot\boldsymbol{z}} P_j(\boldsymbol{x})$, we obtain
\[
\delta_{\boldsymbol{z}} \circ P_j(D) = \delta_{\boldsymbol{0}} \circ \tilde{P}_j(D).
\]
Note that $\operatorname{lm}(\tilde{P}_j(\boldsymbol{x})) = \operatorname{lm}(P_j(\boldsymbol{x}))$, and
\[
\Lambda\{\tilde{P}_j(\boldsymbol{x})\} = \{\boldsymbol{x}^{\boldsymbol{\alpha} + \boldsymbol{\gamma}} \mid \boldsymbol{x}^{\boldsymbol{\alpha}} \in \Lambda\{P_j(\boldsymbol{x})\},\ \boldsymbol{\gamma} \in \mathbb{N}^n\}.
\]
Consider the matrix $M = (m_{ij})_{N \times N}$, where
\[
m_{ij} := \delta_{\boldsymbol{0}} \circ \tilde{P}_i(D) \bigl( \operatorname{lm}(P_j(\mathbf{\boldsymbol{x}})) \bigr), \quad 1 \leq i, j \leq N.
\]
\begin{enumerate}
    \item[(1)] For $i = j$, we have
    \[
    m_{ii} = \delta_{\boldsymbol{0}} \circ \tilde{P}_i(D) \bigl( \operatorname{lm}(P_i(\boldsymbol{x})) \bigr) \neq 0;
    \]
    \item[(2)] For $i > j$, we have $\operatorname{lm}(P_i(\boldsymbol{x})) \succ \operatorname{lm}(P_j(\boldsymbol{x}))$, and
    \[
    m_{ij} = \delta_{\boldsymbol{0}} \circ \tilde{P}_i(D) \bigl( \operatorname{lm}(P_j(\boldsymbol{x})) \bigr)
           = \delta_{\boldsymbol{z}} \circ P_i(D) \bigl( \operatorname{lm}(P_j(\boldsymbol{x})) \bigr)
           = 0.
    \]
\end{enumerate}

In summary, $M$ is an upper triangular matrix with nonzero diagonal entries, hence $M$ is nonsingular. This shows that
\[
T = \{\operatorname{lm}(P_1(\boldsymbol{x})), \operatorname{lm}(P_2(\boldsymbol{x})), \dots, \operatorname{lm}(P_N(\boldsymbol{x}))\}
\]
is an interpolation monomial basis for $\Delta_{\boldsymbol{z}}$. We now prove that $T$ is in fact a minimal monomial basis.

Suppose that we replace some $\operatorname{lm}(P_{j_0}(\boldsymbol{x}))$ in $T$, where $j_0 \in \{1, 2, \dots, N\}$, by a monomial $\boldsymbol{x}^{\boldsymbol{\beta}} \notin T$ with $\boldsymbol{x}^{\boldsymbol{\beta}} \prec \operatorname{lm}(P_{j_0}(\boldsymbol{x}))$. Let
\[
\tilde{T} = \{\boldsymbol{x}^{\boldsymbol{\beta}},\, \operatorname{lm}(P_1(\boldsymbol{x})), \operatorname{lm}(P_2(\boldsymbol{x})), \dots, \operatorname{lm}(P_{j_0-1}(\boldsymbol{x})), \operatorname{lm}(P_{j_0+1}(\boldsymbol{x})), \dots, \operatorname{lm}(P_N(\boldsymbol{x}))\}.
\]
We shall show that $\tilde{T}$ does not form a proper interpolation basis. Consider the Vandermonde matrix associated with $\tilde{T}$ and the interpolation conditions, namely,
\[
(\tilde{m}_{ij})_{N \times N} = 
\begin{cases} 
\delta_{\boldsymbol{z}} \circ P_i(D) \bigl( \operatorname{lm}(P_j(\boldsymbol{x})) \bigr), & 1 \leq i, j \leq N,\ j \neq j_0, \\[6pt]
\delta_{\boldsymbol{z}} \circ P_i(D) ( \boldsymbol{x}^{\boldsymbol{\beta}} ), & j = j_0.
\end{cases}
\]
Note that $\boldsymbol{x}^{\boldsymbol{\beta}} \prec \operatorname{lm}(P_{j_0}(\boldsymbol{x})) = \operatorname{lm}(\tilde{P}_{j_0}(\boldsymbol{x}))$. From
\[
\boldsymbol{x}^{\boldsymbol{\beta}} \prec \operatorname{lm}(P_{j_0+1}(\boldsymbol{x})) \prec \cdots \prec \operatorname{lm}(P_N(\boldsymbol{x})),
\]
it follows that
\[
\tilde{m}_{i,j_0} = \delta_{\boldsymbol{z}} \circ P_i(D)(\boldsymbol{x}^{\boldsymbol{\beta}}) = 0, \quad \forall i = j_0, j_0+1, \dots, N.
\]
The remaining columns of $\tilde{M}=(\tilde{m}_{ij})_{N \times N}$ are computed exactly as in the previous discussion. Hence $\tilde{M}$ is again an upper triangular matrix, but with
$\tilde{m}_{j_0,j_0} = 0,$
which implies that $\tilde{M}$ is singular. Therefore, $\tilde{T}$ does not form a proper monomial basis.
\end{proof}

According to Theorem~\ref{thm:minimal_basis}, for the Birkhoff interpolation problem with interpolation conditions given by \eqref{eq:2.3}, one only needs to compute the reverse reduced set of the polynomials appearing in the interpolation conditions. The least monomial of each polynomial in this reduced set then constitutes the minimal monomial basis w.r.t. $\prec$ for the interpolation problem. 
It is worth noting that the interpolation conditions of the single point Birkhoff interpolation problem are represented by an incidence matrix. The process of finding the reverse reduced set is equivalent to performing Gaussian elimination on the incidence matrix to obtain its reduced row echelon form. Hence we have the following theorem.

\begin{theorem}\label{pivot basis}
Let $\boldsymbol{z}$ be an interpolation node, and let $[\boldsymbol{x}^{\boldsymbol{\alpha}_1}, \boldsymbol{x}^{\boldsymbol{\alpha}_2}, \dots, \boldsymbol{x}^{\boldsymbol{\alpha}_l}]$ be a sequence of monomials arranged in a prescribed order $\prec$, with corresponding incidence matrix $E$. Performing Gaussian elimination on $E$ yields its reduced row echelon form. Then the monomials in the sequence corresponding to the pivot columns of this reduced form constitute the minimal monomial basis (with respect to the given order $\prec$) for the Birkhoff interpolation problem.
\end{theorem}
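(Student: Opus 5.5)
The plan is to reduce Theorem~\ref{pivot basis} to Theorem~\ref{thm:minimal_basis}. The $j$-th row of $E$ is identified with the polynomial $P_j(\boldsymbol{x})=\sum_{h}e_{jh}\boldsymbol{x}^{\boldsymbol{\alpha}_h}$. Elementary row operations on $E$ act on $\{P_1,\dots,P_N\}$ by invertible linear recombination. Hence they do not change $\operatorname{span}\{P_1(D),\dots,P_N(D)\}$, and so they do not change the interpolation conditions $\Delta_{\boldsymbol{z}}$. Since the $N$ functionals are linearly independent, $E$ has full row rank. Its reduced row echelon form $R$ therefore has $N$ nonzero rows and exactly $N$ pivot columns.

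The key step is to show that the rows of $R$, read as polynomials $R_1(\boldsymbol{x}),\dots,R_N(\boldsymbol{x})$, form a reverse reduced set w.r.t.\ $\prec$. The columns are indexed by the monomials $\boldsymbol{x}^{\boldsymbol{\alpha}_1}\prec\cdots\prec\boldsymbol{x}^{\boldsymbol{\alpha}_l}$ in increasing order. The pivot of row $k$ is its leftmost nonzero entry, so it sits in the column of the $\prec$-least monomial occurring in $R_k$. This gives $\operatorname{lm}(R_k)=\boldsymbol{x}^{\boldsymbol{\alpha}_{h_k}}$, where $h_k$ is the pivot column of row $k$, and its coefficient is $1$ by the normalization in RREF; this is condition~1. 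In RREF every pivot column is zero outside its pivot row, so $\boldsymbol{x}^{\boldsymbol{\alpha}_{h_k}}\notin\Lambda\{R_j\}$ for $j\neq k$; this is condition~2. Linear independence of the $R_k$ is immediate, because their least monomials are distinct.

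Applying Theorem~\ref{thm:minimal_basis} to $\Delta_{\boldsymbol{z}}=\delta_{\boldsymbol{z}}\circ\operatorname{span}\{R_1(D),\dots,R_N(D)\}$ then shows that $\{\operatorname{lm}(R_k)\}_{k=1}^N=\{\boldsymbol{x}^{\boldsymbol{\alpha}_{h_k}}\}_{k=1}^N$, the monomials of the pivot columns, form a minimal monomial basis.

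I expect the main obstacle to be bookkeeping rather than substance. The leftmost-nonzero convention of RREF must match the \emph{least} monomial w.r.t.\ $\prec$. This holds only because the sequence $S$ is listed in increasing $\prec$ order; if $S$ were listed in decreasing order, one would have to reverse the columns first. I will state this explicitly. I will also note that the pivot columns depend only on the row space of $E$, so the resulting basis does not depend on which sequence of eliminations is used.
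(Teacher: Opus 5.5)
Your proposal is correct and follows the paper's own route: the paper also derives this theorem from Theorem~\ref{thm:minimal_basis}, by observing that reducing $E$ to reduced row echelon form produces a reverse reduced set whose least monomials are the pivot-column monomials. You spell out what the paper leaves implicit: that $E$ has full row rank, that pivots fall on least monomials because $S$ is listed in increasing $\prec$ order, and that pivot columns vanish outside their own rows.
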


\begin{example}\label{ex2}
Given the interpolation node $\boldsymbol{z} = (1,1)$ and the sequence
\[
S = [y,\ y^2,\ xy,\ x^2,\ y^3],
\]
ordered according to the graded lexicographic order $y \prec_{\mathrm{grlex}} x$, the incidence matrix is
\[
E = 
\begin{pmatrix}
1 & 0 & 0 & 0 & 0 \\
1 & 1 & 0 & 1 & 0 \\
0 & 2 & 3 & 0 & 1
\end{pmatrix}.
\]
The Birkhoff interpolation conditions determined by $E$ can be written as
\[
\Delta = \delta_{(1,1)} \circ \operatorname{span}\{D_y,\ D_y^2 + D_x^2 + D_y,\ D_y^3 + 3D_xD_y + 2D_y^2\}.
\]
Reducing $E$ to its reduced row echelon form yields
\[
\begin{pmatrix}
1 & 0 & 0 & 0 & 0 \\
0 & 1 & 0 & 1 & 0 \\
0 & 0 & 1 & -\frac{2}{3} & \frac{1}{3}
\end{pmatrix}.
\]
The monomials corresponding to the pivot columns form the set $T = \{y,\ y^2,\ xy\}$. By Theorem~2, $\{y,\ y^2,\ xy\}$ is the minimal monomial basis for $\Delta$ with respect to the order $y \prec_{\mathrm{grlex}} x$.
\end{example}

\begin{remark}
For single-node Birkhoff interpolation, the traditional algorithm of Cui \cite{stable} computes the minimal interpolation basis by adding monomials one by one, constructing the Vandermonde matrix, and then performing row reduction. In contrast, the method proposed in this paper directly reduces the incidence matrix, which greatly reduces the computational cost.
\end{remark}

The following corollaries follow directly from Theorem~\ref{thm:minimal_basis}.
\begin{corollary}
\label{cor:independence_of_node}
The minimal monomial interpolation basis for the single point Birkhoff interpolation problem depends only on the differential conditions and is independent of the location of the node.
\end{corollary}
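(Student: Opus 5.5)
The plan is to read the corollary off Theorem~\ref{thm:minimal_basis} together with the reduction-to-reverse-reduced-form argument that precedes it. Given the single-node conditions $\Delta_{\boldsymbol{z}} = \delta_{\boldsymbol{z}} \circ \operatorname{span}\{P_1(D), \dots, P_N(D)\}$, I first apply Gaussian elimination to the coefficient vectors of $P_1(\boldsymbol{x}),\dots,P_N(\boldsymbol{x})$, i.e.\ to the incidence matrix $E$. This produces a reverse reduced set $\{R_1(\boldsymbol{x}),\dots,R_N(\boldsymbol{x})\}$ w.r.t.\ $\prec$ with the same span. Since $\delta_{\boldsymbol{z}}\circ(\cdot)(D)$ is linear in the polynomial, $\Delta_{\boldsymbol{z}} = \delta_{\boldsymbol{z}}\circ\operatorname{span}\{R_1(D),\dots,R_N(D)\}$. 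The essential observation is that this elimination uses only the matrix $E$ and the order $\prec$. The node $\boldsymbol{z}$ never enters, so the set $\{R_j\}$ is a function of the differential conditions alone.

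Next I invoke Theorem~\ref{thm:minimal_basis}, which is stated for an arbitrary node $\boldsymbol{z}\in\mathbb{F}^n$. It gives that $T=\{\operatorname{lm}(R_1),\dots,\operatorname{lm}(R_N)\}$ is a minimal monomial basis for $\Delta_{\boldsymbol{z}}$. Because $T$ is built only from the $R_j$, the same $T$ is a minimal monomial basis for every node. In particular it equals the basis obtained for $\boldsymbol{z}=\boldsymbol{0}$, which is exactly the claim. Equivalently, by Theorem~\ref{pivot basis}, $T$ consists of the pivot-column monomials of the reduced row echelon form of $E$, and that form does not involve $\boldsymbol{z}$.

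The only delicate point is the meaning of \emph{the} minimal monomial basis: I need it to be unique, so that ``independent of the node'' is meaningful. If some other minimal basis $T'$ existed at some node, then ``independent of the node'' could fail for that representative. To rule this out, I will use the standard fact that the set of pivot monomials in the reduced form is invariant under the choice of spanning set, since the pivot columns of a reduced row echelon form are uniquely determined by the row space. I then argue that any proper interpolation basis $T'$ with sorted elements $t'_1\prec\cdots\prec t'_N$ satisfies $t'_k\succeq \operatorname{lm}(R_k)$ for each $k$. This follows from the triangular structure exhibited in the proof of Theorem~\ref{thm:minimal_basis}: a monomial below $\operatorname{lm}(R_k)$ is annihilated by $\delta_{\boldsymbol{z}}\circ R_i(D)$ for all $i\ge k$, which forces a rank deficiency if too many elements of $T'$ lie below it. Combined with the minimality condition, this shows every minimal basis coincides with $T$. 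I expect this uniqueness step to be the main obstacle. If it proves cumbersome, I will state the corollary for the canonical basis $T$ produced by Theorem~\ref{pivot basis}, for which the independence from $\boldsymbol{z}$ is immediate.
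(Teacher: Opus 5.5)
This is essentially the paper's argument. The paper gives no separate proof and says only that the corollary follows directly from Theorem~\ref{thm:minimal_basis}, for exactly your reason: the reverse reduced set (equivalently, the reduced row echelon form of $E$) is computed without reference to $\boldsymbol{z}$, while Theorem~\ref{thm:minimal_basis} holds at every node.

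Your uniqueness step goes beyond the paper, which tacitly treats ``the'' minimal basis as well defined, and it is sound. However, the passage from $t'_k\succeq\operatorname{lm}(R_k)$ to $T'=T$ needs an explicit exchange step. Take the first $k$ with $t'_k\neq\operatorname{lm}(R_k)$. The column of $\operatorname{lm}(R_k)$ has a nonzero entry in row $k$, while the columns of $\operatorname{lm}(R_1),\dots,\operatorname{lm}(R_{k-1})$ vanish in that row. So $\operatorname{lm}(R_k)$ can replace some $t'_j$ with $j\ge k$, hence $t'_j\succ\operatorname{lm}(R_k)$, while keeping the matrix nonsingular, which contradicts the minimality of $T'$.
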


\begin{corollary}
\label{cor:support_intersection}
Let $\prec$ be a monomial order and $\boldsymbol{z} \in \mathbb{F}^n$ an interpolation node. Let
\begin{equation}\label{3.2}
 \Delta_{\boldsymbol{z}} = \delta_{\boldsymbol{z}} \circ \operatorname{span}\{P_1(D), P_2(D), \dots, P_N(D)\},   
\end{equation}
where the polynomials $P_i(\boldsymbol{x})$ are linearly independent. If $T$ is a $\prec$-minimal interpolation monomial basis for $\Delta_{\boldsymbol{z}}$, then for each
$P_i(\boldsymbol{x}), i = 1, 2, \dots, N,$
there exists a monomial $\boldsymbol{x}^{\boldsymbol{\alpha}_i} \in \Lambda\{P_i(\boldsymbol{x})\}$ such that $\boldsymbol{x}^{\boldsymbol{\alpha}_i} \in T$.
\end{corollary}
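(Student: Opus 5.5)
The plan is to reduce the statement to Theorem~\ref{thm:minimal_basis} by passing to an equivalent reverse reduced set, and then to read off the required monomial from a linear combination. First apply Gaussian elimination to $\{P_1(\boldsymbol{x}),\dots,P_N(\boldsymbol{x})\}$ to obtain a reverse reduced set $\{R_1(\boldsymbol{x}),\dots,R_N(\boldsymbol{x})\}$ w.r.t.\ $\prec$ with the same span. Then $\Delta_{\boldsymbol{z}}=\delta_{\boldsymbol{z}}\circ\operatorname{span}\{R_1(D),\dots,R_N(D)\}$. By Theorem~\ref{thm:minimal_basis}, $T^{*}=\{\operatorname{lm}(R_1(\boldsymbol{x})),\dots,\operatorname{lm}(R_N(\boldsymbol{x}))\}$ is a minimal monomial basis for $\Delta_{\boldsymbol{z}}$.

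\textbf{Step 1 (uniqueness).} The first task is to show that any $\prec$-minimal basis $T$ coincides with $T^{*}$. I would use a matroid/exchange argument. To each monomial $\boldsymbol{x}^{\boldsymbol{\beta}}$ associate the column vector
\[
v(\boldsymbol{x}^{\boldsymbol{\beta}})=\bigl(\delta_{\boldsymbol{z}}\circ R_i(D)\,\boldsymbol{x}^{\boldsymbol{\beta}}\bigr)_{i=1}^{N}.
\]
A set of $N$ monomials is a proper basis exactly when its vectors form a basis of $\mathbb{F}^N$, so proper bases are the bases of a (column) matroid. The triangularity computation in the proof of Theorem~\ref{thm:minimal_basis} shows that $v(\boldsymbol{x}^{\boldsymbol{\beta}})$ lies in $\operatorname{span}\{v(\operatorname{lm}(R_k))\colon \operatorname{lm}(R_k)\prec \boldsymbol{x}^{\boldsymbol{\beta}}\}$ whenever $\boldsymbol{x}^{\boldsymbol{\beta}}\notin T^{*}$. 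Indeed, rows $i$ with $\operatorname{lm}(R_i)\succ\boldsymbol{x}^{\boldsymbol{\beta}}$ vanish, and the leading block is nonsingular and triangular. Hence $T^{*}$ is exactly the greedy basis obtained by scanning monomials in $\prec$-increasing order.

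Now suppose $T\neq T^{*}$, and pick $t\in T\setminus T^{*}$. The span property above shows that $v(t)$ depends on $\{v(s)\colon s\in T^{*},\ s\prec t\}$. Since $T\setminus\{t\}$ spans a hyperplane that does not contain $v(t)$, some $s\in T^{*}$ with $s\prec t$ has $v(s)$ outside that hyperplane. Then $(T\setminus\{t\})\cup\{s\}$ is again a proper basis, and $s\notin T$ because $v(s)$ is independent of $v(T\setminus\{t\})$ and $s\neq t$. This contradicts condition (2) of minimality, so $T=T^{*}$. This exchange step is the one I expect to need the most care: one must check that the replacing monomial is genuinely new and strictly smaller.

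\textbf{Step 2 (reading off the monomial).} Fix $i$. Since $P_i\in\operatorname{span}\{R_1,\dots,R_N\}$ and $P_i\neq 0$, we can write $P_i=\sum_{k}c_kR_k$ with some $c_{k_0}\neq 0$. By the reverse reduced property, $\operatorname{lm}(R_{k_0})$ has coefficient $1$ in $R_{k_0}$ and does not occur in any $R_k$ with $k\neq k_0$. Hence the coefficient of $\operatorname{lm}(R_{k_0})$ in $P_i$ is $c_{k_0}\neq0$, so $\operatorname{lm}(R_{k_0})\in\Lambda\{P_i(\boldsymbol{x})\}\cap T$. Setting $\boldsymbol{x}^{\boldsymbol{\alpha}_i}=\operatorname{lm}(R_{k_0})$ finishes the proof.
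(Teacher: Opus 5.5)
Your proof is correct and follows the paper's route. The paper also passes to a reverse reduced set $\{R_1,\dots,R_N\}$ with the same span, writes $P_i=\sum_k c_kR_k$ with $c_{k_0}\neq 0$, and notes that $\operatorname{lm}(R_{k_0})$ survives in $P_i$ with coefficient $c_{k_0}$; it phrases this as a contradiction where you argue directly.

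The one real difference is your Step~1. The paper simply asserts that the given $T$ \emph{is} $\{\operatorname{lm}(R_1),\dots,\operatorname{lm}(R_N)\}$. Your exchange argument instead proves that a $\prec$-minimal basis, in the single-replacement sense of the definition, is unique. That argument is correct and closes a step the paper leaves implicit.
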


\begin{proof}
    The interpolation conditions \eqref{3.2} can be transformed into the equivalent form
\[
\Delta_{\boldsymbol{z}} = \delta_{\boldsymbol{z}} \circ \operatorname{span}\{R_1(D), R_2(D), \dots, R_N(D)\},
\]
where $\{R_1(\boldsymbol{x}), R_2(\boldsymbol{x}), \dots, R_N(\boldsymbol{x})\}$ is a reverse reduced set. Hence, by Theorem~1,
\[
T = \{\operatorname{lm}(R_1(\boldsymbol{x})), \operatorname{lm}(R_2(\boldsymbol{x})), \dots, \operatorname{lm}(R_N(\boldsymbol{x}))\}
\]
is the minimal monomial interpolation basis for $\Delta_{\boldsymbol{z}}$.
We now prove by contradiction that for every $P_i(\boldsymbol{x})$, $i = 1, 2, \dots, N$, there exists a monomial $\boldsymbol{x}^{\boldsymbol{\alpha}} \in \Lambda\{P_i(\boldsymbol{x})\} \cap T$. Suppose that there exists some $P_{t_0}(\boldsymbol{x})$ with $t_0 \in \{1, 2, \dots, N\}$ such that $\Lambda\{P_{t_0}(\boldsymbol{x})\} \cap T = \emptyset$. Since
\[
\operatorname{span}\{P_1(\boldsymbol{x}), P_2(\boldsymbol{x}), \dots, P_N(\boldsymbol{x})\} = \operatorname{span}\{R_1(\boldsymbol{x}), R_2(\boldsymbol{x}), \dots, R_N(\boldsymbol{x})\},
\]
the polynomial $P_{t_0}(\boldsymbol{x})$ can be expressed as a linear combination
\begin{equation}\label{eq3.5}
  P_{t_0}(\boldsymbol{x}) = c_1 R_1(\boldsymbol{x}) + c_2 R_2(\boldsymbol{x}) + \cdots + c_N R_N(\boldsymbol{x}),  
\end{equation}
where the coefficients $c_1, c_2, \dots, c_N$ are not all zero. Choose $k_0 \in \{1, 2, \dots, N\}$ such that $c_{k_0} \neq 0$. By the defining properties of a reverse reduced set, none of the polynomials
\[
R_1(\boldsymbol{x}), R_2(\boldsymbol{x}), \dots, R_{k_0-1}(\boldsymbol{x}), R_{k_0+1}(\boldsymbol{x}), \dots, R_N(\boldsymbol{x})
\]
contains the monomial $\operatorname{lm}(R_{k_0}(\boldsymbol{x}))$. Consequently, the term $c_{k_0} \operatorname{lm}(R_{k_0}(\boldsymbol{x}))$ appearing in $c_{k_0} R_{k_0}(\boldsymbol{x})$ cannot be canceled by any other summand on the right-hand side of \eqref{eq3.5}. Hence the right-hand side of \eqref{eq3.5} necessarily contains the monomial $\operatorname{lm}(R_{k_0}(\boldsymbol{x}))$. But $\operatorname{lm}(R_{k_0}(\boldsymbol{x})) \in T$, which contradicts the assumption that $\Lambda\{P_{t_0}(\boldsymbol{x})\} \cap T = \emptyset$.
\end{proof}

\section{An algorithm for computing the minimal monomial basis for the general Birkhoff interpolation problem}\label{sec:4}

For the general Birkhoff interpolation problem,  we use of the ring of formal power series to convert the interpolation conditions at multiple nonzero nodes into conditions at the origin. %The correspondence is summarized in Table~1, after which the subsequent computation proceeds.

The interpolation conditions of the multi-node Birkhoff interpolation problem can be written as
\begin{equation}
  \begin{aligned}\label{4.1}
&\delta_{\boldsymbol{z}_1} \circ \operatorname{span}\{P_{11}(D), P_{12}(D), \dots, P_{1s_1}(D)\}, \\
&\delta_{\boldsymbol{z}_2} \circ \operatorname{span}\{P_{21}(D), P_{22}(D), \dots, P_{2s_2}(D)\}, \\
&\qquad \vdots \\
&\delta_{\boldsymbol{z}_m} \circ \operatorname{span}\{P_{m1}(D), P_{m2}(D), \dots, P_{ms_m}(D)\}.
\end{aligned}  
\end{equation}
For $i = 1, 2, \dots, m$ and $j = 1, 2, \dots, s_i$, set
\begin{equation}\label{4.2}
    \tilde{P}_{ij}(\boldsymbol{x}) = e^{\boldsymbol{x} \cdot \boldsymbol{z}_i} P_{ij}(\boldsymbol{x}) = \sum_{k=0}^{\infty} \frac{(\boldsymbol{x} \cdot \boldsymbol{z}_i)^k}{k!} P_{ij}(\boldsymbol{x}) \triangleq \sum_{\boldsymbol{\alpha} \in \mathbb{N}^n} c_{i\boldsymbol{\alpha}} \, \boldsymbol{x}^{\boldsymbol{\alpha}}. 
\end{equation}

From the preceding discussion, the interpolation conditions (\ref{4.1}) are equivalent to the following conditions at the origin:
\[
\begin{aligned}
&\delta_{\boldsymbol{0}} \circ \operatorname{span}\{\tilde{P}_{11}(D), \tilde{P}_{12}(D), \dots, \tilde{P}_{1s_1}(D)\}, \\
&\delta_{\boldsymbol{0}} \circ \operatorname{span}\{\tilde{P}_{21}(D), \tilde{P}_{22}(D), \dots, \tilde{P}_{2s_2}(D)\}, \\
&\qquad \vdots \\
&\delta_{\boldsymbol{0}} \circ \operatorname{span}\{\tilde{P}_{m1}(D), \tilde{P}_{m2}(D), \dots, \tilde{P}_{ms_m}(D)\}.
\end{aligned}
\]
By Theorem~\ref{thm:minimal_basis}, we obtain the following conclusion for the multi-node Birkhoff interpolation problem.
\begin{theorem}
\label{thm:multivariate_minimal_basis}
Let
$\mathcal{R} = \{R_{11}(\boldsymbol{x}),R_{12}(\boldsymbol{x}), \dots, R_{1s_1}(\boldsymbol{x}), \dots, R_{m1}(\boldsymbol{x}), R_{m2}(\boldsymbol{x}), \dots, R_{ms_m}(\boldsymbol{x})\}$
be the reverse reduced set of the polynomials
\[
\{\tilde{P}_{11}(\boldsymbol{x}),\tilde{P}_{12}(\boldsymbol{x}), \dots, \tilde{P}_{1s_1}(\boldsymbol{x}), \dots, \tilde{P}_{m1}(\boldsymbol{x}),\tilde{P}_{m2}(\boldsymbol{x}), \dots, \tilde{P}_{ms_m}(\boldsymbol{x})\},
\]
where $\tilde{P}_{ij}(\boldsymbol{x}) = e^{\boldsymbol{x} \cdot \boldsymbol{z}_i} P_{ij}(\boldsymbol{x})$. Then the set of least monomials
\[
\{\operatorname{lm}(R_{11}(\boldsymbol{x})), \operatorname{lm}(R_{12}(\boldsymbol{x})), \dots, \operatorname{lm}(R_{1s_1}(\boldsymbol{x})), \dots, \operatorname{lm}(R_{m1}(\boldsymbol{x})), \operatorname{lm}(R_{m2}(\boldsymbol{x})),\dots, \operatorname{lm}(R_{ms_m}(\boldsymbol{x}))\}
\]
constitutes the minimal monomial interpolation basis w.r.t. $\prec$ for the multi-node Birkhoff interpolation problem \eqref{4.1}.
\end{theorem}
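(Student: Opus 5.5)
The plan is to reduce Theorem~\ref{thm:multivariate_minimal_basis} to the single-node argument of Theorem~\ref{thm:minimal_basis}, applied at the origin to the formal power series $\tilde{P}_{ij}$. The multi-node case never uses a single shared node. It only uses the pairing $\lambda * f$ with formal power series, the reverse-reduced structure, and the fact that $\delta_{\boldsymbol{0}}\circ D^{\boldsymbol{\alpha}}\boldsymbol{x}^{\boldsymbol{\beta}}=\boldsymbol{\alpha}!\,\delta_{\boldsymbol{\alpha\beta}}$.

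\textbf{Step 1: rewrite every functional at the origin.} By \eqref{eq2}, each $\delta_{\boldsymbol{z}_i}\circ P_{ij}(D)$ equals $\tilde{P}_{ij} * (\cdot)$. The functionals are assumed linearly independent, and the correspondence of Table~\ref{biao1} is injective. Hence the series $\tilde{P}_{ij}$ are linearly independent in $\mathbb{F}[[\boldsymbol{x}]]$.

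\textbf{Step 2: form the reverse reduced set.} Since $\prec$ is graded, only finitely many monomials lie below any given monomial. I would therefore carry out Gaussian elimination on truncations of the coefficient vectors up to a sufficiently large degree. Choose the truncation degree $d$ so that the truncations are linearly independent; such a $d$ exists because the series are independent. The elimination is done with respect to least monomials and proceeds through increasing monomials. It produces $R_{ij}\in\operatorname{span}\{\tilde{P}_{kl}\}$ with distinct least monomials, each with coefficient $1$, and with $\operatorname{lm}(R_{ij})\notin\Lambda\{R_{kl}\}$ for $(k,l)\neq(i,j)$.

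This is the main obstacle to make rigorous. The reduction must remove each pivot monomial from \emph{all} other series, including their infinite tails. That is still a finite linear operation on the $N$ series, so it is well defined. One must also check that pivots found at degree $\le d$ are genuine least monomials of the full series, which holds because $\prec$ is graded. Since the $R$'s span the same space, $\operatorname{span}\{R(D)\}$ gives the same interpolation conditions at $\boldsymbol{0}$.

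\textbf{Step 3: repeat the two halves of the proof of Theorem~\ref{thm:minimal_basis}.} Order the least monomials as $t_1\prec\cdots\prec t_N$, with corresponding series $R_1,\dots,R_N$. Set $m_{ij}=R_i * t_j$, which equals the coefficient of $t_j$ in $R_i$ times $\boldsymbol{\beta}_j!$, where $t_j=\boldsymbol{x}^{\boldsymbol{\beta}_j}$.
\begin{itemize}
\item The diagonal entries are nonzero, since $t_i$ has coefficient $1$ in $R_i$.
\item For $i>j$ we have $t_j\prec t_i=\operatorname{lm}(R_i)$, so $m_{ij}=0$.
\item By the reverse reduced property, $m_{ij}=0$ for $i<j$ as well.
\end{itemize}
So $M$ is in fact diagonal and nonsingular, and the least monomials form a proper interpolation basis.

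For minimality, replace $t_{j_0}$ by some $\boldsymbol{x}^{\boldsymbol{\beta}}\prec t_{j_0}$ with $\boldsymbol{x}^{\boldsymbol{\beta}}\notin T$. Then $R_i*\boldsymbol{x}^{\boldsymbol{\beta}}=0$ for all $i\ge j_0$, because $\boldsymbol{x}^{\boldsymbol{\beta}}\prec t_i=\operatorname{lm}(R_i)$. The other columns are unchanged and remain diagonal. The resulting matrix is therefore upper triangular with a zero diagonal entry in position $(j_0,j_0)$, hence singular, exactly as in the proof of Theorem~\ref{thm:minimal_basis}. Equivalently, the rows $j_0,\dots,N$ are supported on only $N-j_0$ columns. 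This establishes both conditions of the definition of a minimal monomial basis.
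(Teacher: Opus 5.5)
Your proposal is correct and follows the paper's route. You move every functional to the origin via $\tilde{P}_{ij}=e^{\boldsymbol{x}\cdot\boldsymbol{z}_i}P_{ij}$ and then rerun the triangular-matrix argument of Theorem~\ref{thm:minimal_basis}, which the paper simply cites; you also fill in what it leaves implicit: the argument works for formal power series, a reverse reduced set exists by finite truncation since $\prec$ is graded, and the matrix here is in fact diagonal.
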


Theorem~\ref{thm:multivariate_minimal_basis} provides a concise theoretical framework for the general multivariate Birkhoff interpolation problem: one only needs to compute the reverse reduced set of the corresponding elements in the ring of formal power series and then take the least monomial of each element. This directly yields the desired minimal monomial basis. However, each $\tilde{P}_{ij}(\boldsymbol{x})$ defined by \eqref{4.2} is an infinite series, making it impossible to perform algebraic reduction directly within finitely many steps. Therefore, in actual computation, these series must first be suitably truncated. This converts the infinite-dimensional problem into a tractable one of row reduction on a finite matrix. In view of this, Algorithm~\ref{alg:truncated_set} below presents a truncation strategy that retains only finitely many candidate monomials. This lays the foundation for the subsequent reduction algorithm.

For the purpose of computational implementation, we adopt the following algorithm to retain only finitely many monomials.

\begin{algorithm}[h]
\caption{Computing the candidate monomial sequence $\tilde{S}$}
\label{alg:truncated_set}
\begin{algorithmic}
\State \parbox[t]{\linewidth}{\textbf{Input:} 
\begin{minipage}[t]{0.85\linewidth}
Interpolation polynomials
$P_{ij}(\boldsymbol{x}),  i = 1, 2, \dots, m; j = 1, 2, \dots, s_i;$\\ 
\hspace{1cm} \ graded order $\prec$.
\end{minipage}}
\State \textbf{Output:} Candidate monomial sequence $\tilde{S}$.
\State \textbf{Step 1: Relabel least monomials}
\State Arrange the least monomials $\operatorname{lm}(P_{ij}(\boldsymbol{x}))$ in increasing order with respect to $\prec$, and relabel them as
\State \hspace{\algorithmicindent} $\operatorname{lm}(P_1(\boldsymbol{x})) \prec \operatorname{lm}(P_2(\boldsymbol{x})) \prec \cdots \prec \operatorname{lm}(P_N(\boldsymbol{x})).$
\State \textbf{Step 2: Select candidate monomials}
\State From the support $\Lambda\{P_{ij}(\boldsymbol{x})\}$, select all monomials $\boldsymbol{x}^{\boldsymbol{\alpha}}$ satisfying
\State \hspace{\algorithmicindent} $\operatorname{lm}(P_1(\boldsymbol{x})) \preceq \boldsymbol{x}^{\boldsymbol{\alpha}} \preceq \operatorname{lm}(P_N(\boldsymbol{x})).$
\State $N^*:=$ the total number of such monomials.
\State Arrange these $N^*$ monomials in order $\prec$ and denote them as the monomial sequence $\tilde{S}:=[\boldsymbol{x}^{\boldsymbol{\alpha}_1}, \boldsymbol{x}^{\boldsymbol{\alpha}_2}, \dots, \boldsymbol{x}^{\boldsymbol{\alpha}_{N^*}}]$.
\If{$N^* < N$}
    %\State $\tilde{S} := [\boldsymbol{x}^{\boldsymbol{\alpha}_1}, \boldsymbol{x}^{\boldsymbol{\alpha}_2}, \dots, \boldsymbol{x}^{\boldsymbol{\alpha}_{N^*}}].$
%\Else
    \State Add to $\tilde{S}$ the $N-N^*$ monomials that following $\operatorname{lm}(P_N(\boldsymbol{x}))$ w.r.t. $\prec$.
\EndIf
\State \Return $\tilde{S}$
\end{algorithmic}
\end{algorithm}

\newpage
\begin{example}\label{ex3}
    Consider the interpolation nodes $\boldsymbol{z}_1 = (0, 0)$ and $\boldsymbol{z}_2 = (1, 1)$ from Example~\ref{ex1}. Given the sequence $S = [y, y^2, xy]$ ordered by the graded lexicographic order $y \prec_{\mathrm{grlex}} x$, the incidence matrix is $\boldsymbol{E} = \begin{pmatrix} E_1 \\ E_2 \end{pmatrix}$, where
\[
E_1 = \begin{pmatrix} 1 & 0 & 0 \\ 1 & 0 & 1 \end{pmatrix}, \quad
E_2 = \begin{pmatrix} 1 & 0 & 0 \\ 4 & 1 & 2 \end{pmatrix}.
\]

The corresponding sequence of differential operators is
\[
D = \left[ \frac{\partial}{\partial y},\ \frac{\partial^2}{\partial y^2},\ \frac{\partial^2}{\partial x \partial y} \right].
\]

Then the interpolation conditions $\Delta$ can be written as
\[
\Delta = 
\begin{cases} 
\delta_{(0,0)} \circ \operatorname{span}\{D_y,\ D_y + D_x D_y\}, \\[4pt]
\delta_{(1,1)} \circ \operatorname{span}\{D_y,\ 4D_y + D_y^2 + 2D_x D_y\}.
\end{cases}
\]

Set
\[
\tilde{P}_{11}(\boldsymbol{x}) = y,\  
\tilde{P}_{12}(\boldsymbol{x}) = y + xy,\  
\tilde{P}_{21}(\boldsymbol{x}) = y e^{x+y},\ 
\tilde{P}_{22}(\boldsymbol{x}) = (4y + y^2 + 2xy) e^{x+y},\]
%Expanding $e^{x+y}$ as a Taylor series gives\[e^{x+y} = 1 + (x+y) + \frac{(x+y)^2}{2!} +\frac{(x+y)^3}{3!} + \cdots.\]
where 
\[
\begin{aligned}
\tilde{P}_{21}(\boldsymbol{x}) &= y e^{x+y} = y + y^2 + xy + \frac{1}{2}y^3 + xy^2 + \frac{1}{2}x^2y + \cdots, \\[4pt]
\tilde{P}_{22}(\boldsymbol{x}) &= (4y + y^2 + 2xy) e^{x+y} = 4y + 5y^2 + 6xy + 3y^3 + 7xy^2 + 4x^2y + \frac{1}{2}y^4 + 2xy^3 + \frac{5}{2}x^2y^2 + x^3y + \cdots.
\end{aligned}
\]
Using Algorithm~\ref{alg:truncated_set}, we first obtain the set $\{y\}$ consisting of the least monomials of all $\tilde{P}_{ij}(\boldsymbol{x})$. Since this set contains only one element, we simply append the next three monomials following $y$ w.r.t. $y \prec_{\mathrm{grlex}} x$.
Finally, we form the sequence $\tilde{S} = [y, y^2, xy, y^3]$. Truncating each of
\[
\tilde{P}_{11}(\boldsymbol{x}),\ \tilde{P}_{12}(\boldsymbol{x}),\ \tilde{P}_{21}(\boldsymbol{x}),\ \tilde{P}_{22}(\boldsymbol{x})
\]
by retaining only the monomials that belong to $\tilde{S}$, we obtain the polynomials
\[
g_1(\boldsymbol{x}) := y,\ 
g_2(\boldsymbol{x}) := y + xy,\ 
g_3(\boldsymbol{x}) := y + y^2 + xy + \frac{1}{2}y^3,\ 
g_4(\boldsymbol{x}) := 4y + 5y^2 + 6xy + 3y^3.
\]

These can be written in matrix form as
\[
\begin{pmatrix}
g_1(\boldsymbol{x}) \\
g_2(\boldsymbol{x}) \\
g_3(\boldsymbol{x}) \\
g_4(\boldsymbol{x})
\end{pmatrix}
=
\begin{pmatrix}
1 & 0 & 0 & 0 \\
1 & 0 & 1 & 0 \\
1 & 1 & 1 & \frac{1}{2} \\
4 & 5 & 6 & 3
\end{pmatrix}
\begin{pmatrix}
y \\
y^2 \\
xy \\
y^3
\end{pmatrix}
\triangleq \tilde{E} \cdot \tilde{S}^{\mathrm{T}}.
\]
Performing Gaussian elimination on $\tilde{E}$ yields its reduced row echelon form
\[
\tilde{E}_{\text{rref}} =
\begin{pmatrix}
1 & 0 & 0 & 0 \\
0 & 1 & 0 & 0 \\
0 & 0 & 1 & 0 \\
0 & 0 & 0 & 1
\end{pmatrix}.
\]
By Theorem~\ref{pivot basis}, we obtain the minimal monomial interpolation basis w.r.t. $\prec$ for this example:
$\{y,\ y^2,\ xy,\ y^3\}$.
\end{example}
\begin{remark}
Example~\ref{ex3} shows that the main computational cost of our algorithm lies in the Gaussian elimination of the matrix $\tilde{E}$. Since $\tilde{E}$ can be easily obtained from the incidence matrix, the algorithm avoids the reliance on Vandermonde matrices required by traditional methods. Finally, we obtain the minimal monomial interpolation basis w.r.t. $\prec$ for the given interpolation conditions.
\end{remark}

The complete algorithm for reverse reduction and minimal interpolation basis computation is given as follows.
%\newpage
\begin{algorithm}[h]
\caption{Minimal interpolation basis for the general Birkhoff interpolation problem}
\label{alg:minimal_basis}
\begin{algorithmic}[1]
\State \parbox[t]{\linewidth}{\textbf{Input:} 
\begin{minipage}[t]{0.85\linewidth}
Interpolation nodes \( z_1, z_2, \dots, z_m \); \\
the associated polynomials \( P_{ij}(x), i = 1, 2, \dots, m; j = 1, 2, \dots, s_i \); \\
a graded order \( \prec \); \\
the candidate sequence \( \tilde{S} = [S_1, S_2, \dots, S_{N^*}] \) from Algorithm \ref{alg:truncated_set}.
\end{minipage}}

\State \textbf{Output:} The $\prec$-minimal monomial basis $T$.
\State $N := \sum_{i=1}^{m} s_i$;
\State  $\tilde{P}_{ij}(\boldsymbol{x}) := e^{\boldsymbol{x} \cdot \boldsymbol{z}_i} P_{ij}(\boldsymbol{x})$ by \eqref{4.2};
\State $d:=0$;
\State $\tilde{E}_{N \times N^*}=[\ ]$;
 \While {$d < N$} \do \\   
    \State $r := 1$;
    \For{$i = 1:m$}
        \For{$j = 1:s_i$}
            \State $g_r(\boldsymbol{x}) := \sum_{\boldsymbol{x}^{\boldsymbol{\alpha}} \in \tilde{S}} \operatorname{coeff}\bigl(\tilde{P}_{ij}(\boldsymbol{x}), \boldsymbol{x}^{\boldsymbol{\alpha}}\bigr) \cdot \boldsymbol{x}^{\boldsymbol{\alpha}}$;
            \State Fill row $r$ of $\tilde{E}$ with the coefficients of $g_r(\boldsymbol{x})$;
            \State $r := r + 1$;
        \EndFor
    \EndFor
    \State $\tilde{E}_{\text{rref}} := \text{rref}(\tilde{E})$(Compute the reduced row echelon form of $\tilde{E}$);
    \State $d := \operatorname{rank}(\tilde{E}_{\text{rref}})$;
    \If{$d<N$} 
        \State Add the $N - d$ monomials following $S_{N^*}$ w.r.t. $\prec$ into $\tilde{S}$;
        \State $N^*:=N^*+N-d;$
        \State $\tilde{E}:=[\tilde{E},\mathbf{0}_{N \times(N-d)}]$ (Extend matrix $\tilde{E}$ by appending  $N-d$ columns); 
    \EndIf
    \EndWhile
\State $T :=$ monomials corresponding to the pivot columns of $\tilde{E}_{\text{rref}}$;
\State \Return $T$.
\end{algorithmic}
\end{algorithm}

\begin{theorem}(Correctness of Algorithm~\ref{alg:minimal_basis})
For the Birkhoff interpolation problem with interpolation conditions \eqref{4.1}, Algorithm~\ref{alg:minimal_basis} terminates in finitely many steps, and the set $T$ of monomials it produces is precisely the minimal monomial basis.
\end{theorem}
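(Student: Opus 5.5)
Set $N=\sum_i s_i$ and consider the infinite coefficient matrix $\mathcal{E}$ whose rows hold the Taylor coefficients of $\tilde P_{ij}$, with columns indexed by all monomials in increasing $\prec$-order. The functionals in \eqref{4.1} are linearly independent, so the $\tilde P_{ij}$ are linearly independent in $\mathbb{F}[[\boldsymbol{x}]]$. The plan is to show two things: the algorithm's output equals the set of pivot monomials of $\mathcal{E}$ under row reduction from the \emph{least} monomial upward, and these pivot monomials are exactly the least monomials of the reverse reduced set $\mathcal{R}$. Once both hold, Theorem~\ref{thm:multivariate_minimal_basis} gives minimality.

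\textbf{Termination.} Because the rows of $\mathcal{E}$ are independent, there is a finite $k$ for which the $N\times k$ submatrix on the first $k$ columns (under $\prec$) has rank $N$. Concretely, pick $N$ columns carrying a nonzero $N\times N$ minor and let $k$ cover them. A graded order has only finitely many monomials below any fixed one, so every one of those columns is reached after finitely many enlargements of $\tilde S$. Each pass of the while loop with $d<N$ adds $N-d\ge1$ new monomials, so $N^*$ grows strictly until it passes every column of that minor; at that point $d=N$ and the loop stops.

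\textbf{Identifying the pivots.} This is the delicate step. I need the truncated $\tilde S$ to be an initial segment of the monomials under $\prec$, or at least to contain every monomial $\prec$-below its largest element that appears in some $\tilde P_{ij}$. If that holds, the prefix property of Gaussian elimination applies: for each prefix of columns, the rank of the truncated matrix equals the rank of the corresponding prefix of $\mathcal{E}$. Pivot positions depend only on these prefix ranks, so rref of $\tilde E$ has the same pivot columns as the infinite reduction.

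Algorithm~\ref{alg:truncated_set} only chooses support monomials of the $P_{ij}$ between $\operatorname{lm}(P_1)$ and $\operatorname{lm}(P_N)$ and then appends successors. Every monomial of $\tilde P_{ij}$ is $\succeq \operatorname{lm}(P_{ij})\succeq\operatorname{lm}(P_1)$, so nothing below $\operatorname{lm}(P_1)$ is lost. I would therefore argue, or read the algorithm so that it does, that the candidate monomials are all monomials in the range $[\operatorname{lm}(P_1),\max\tilde S]$. This is the main obstacle, since monomials such as $y^2$ in Example~\ref{ex3} come from the exponential factor and not from $\Lambda\{P_{ij}\}$. I would handle it by taking the consecutive-segment reading, which is what Example~\ref{ex3} actually does.

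\textbf{Identification with the reverse reduced set.} Row-reducing the full $\mathcal{E}$ from the least column upward, with pivots normalized to $1$ and every other row cleared in each pivot column, produces power series $R_{ij}$. These satisfy the two defining conditions of a reverse reduced set, and their least monomials are the pivot monomials. A reverse reduced set of a given span is unique, since its least monomials are forced as the pivot positions. The output $T$ is therefore $\{\operatorname{lm}(R_{ij})\}$, and Theorem~\ref{thm:multivariate_minimal_basis} (which rests on Theorem~\ref{thm:minimal_basis} applied at the origin) finishes the proof. The same triangular argument as in Theorem~\ref{thm:minimal_basis} also shows directly that the Vandermonde matrix on $T$ is nonsingular and that lowering any element forces a zero pivot.
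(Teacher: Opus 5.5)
Your argument has the same skeleton as the paper's proof: row-reduce the truncated $\tilde E$, read its rows as a reverse reduced set, and invoke Theorem~\ref{thm:multivariate_minimal_basis}. It is more careful at the one step that matters. The paper's proof only asserts that the nonzero rows of $\tilde E_{\mathrm{rref}}$ ``correspond to a reverse reduced set''. It never relates the truncated polynomials $g_r$ to the series $\tilde P_{ij}$, and it settles termination with a one-line appeal to the graded order. Your prefix-rank argument is exactly the missing justification, and your termination argument is a real proof. Both are correct provided the initial $\tilde S$ contains every monomial that occurs in some $\tilde P_{ij}$ and lies $\prec$-below $\max\tilde S$. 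The loop only appends successors of the last element, so it preserves this property.

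The obstacle you flagged is genuine, so do not offer the consecutive-segment version as something you might alternatively ``argue''. If Step~2 of Algorithm~\ref{alg:truncated_set} is read literally, selecting from the supports of the $P_{ij}$, the statement is false. Take $n=1$ with conditions $\delta_0$ and $\delta_0\circ D^2$ at $z_1=0$ and $\delta_1$ at $z_2=1$, so $N=3$; the least monomials $1,1,x^2$ repeat, as in Examples~\ref{ex3} and~\ref{ex4}. Step~2 selects $\{1,x^2\}$, so $N^*=2<3$, and appending $x^3$ gives $\tilde S=[1,x^2,x^3]$. The truncated rows are $(1,0,0)$, $(0,1,0)$ and, from $e^{x}$, $(1,\tfrac12,\tfrac16)$. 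The rank is $3$ at once, so the algorithm outputs $\{1,x^2,x^3\}$. Replacing $x^3$ by the lower monomial $x$ gives $\{1,x,x^2\}$, whose Vandermonde matrix has rows $(1,0,0)$, $(0,0,2)$, $(1,1,1)$ and determinant $-2$. So the output is not minimal. The skipped monomial $x$ arises only from the exponential factor, and it is a pivot of the untruncated matrix.

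State your condition as a correction of Algorithm~\ref{alg:truncated_set}: either select from $\Lambda\{\tilde P_{ij}\}$, which is finite on $[\operatorname{lm}(P_1),\operatorname{lm}(P_N)]$ by gradedness, or take all monomials in that range. With that correction your proof is complete.

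One minor inaccuracy: Example~\ref{ex3} is not a consecutive segment of all monomials, since it omits $x$ and $x^2$. It satisfies only your weaker condition, which suffices because those columns vanish identically. Example~\ref{ex4} is the one that uses a full segment.
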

\begin{proof}
The algorithm uses a graded order, which guarantees the finite termination of Algorithm~\ref{alg:minimal_basis}. When the algorithm terminates, the reduced row echelon form $\tilde{E}_{\mathrm{rref}}$ computed at line~17 has full rank $N$. At this point, each infinite series $\tilde{P}_{ij}$ has been truncated to the current candidate sequence $\tilde{S}$ at lines~11--12, and its coefficients have been inserted into the corresponding rows of the coefficient matrix $\tilde{E}$. After row reduction is performed on $\tilde{E}$ at line~16, the nonzero rows of its reduced row echelon form correspond to a reverse reduced set. The pivot columns precisely indicate the least monomials of the elements in this reverse reduced set. Accordingly, line~24 collects the monomials corresponding to the pivot columns to form the set $T$. By Theorem~\ref{thm:multivariate_minimal_basis}, the leading monomials of this reverse reduced set constitute the minimal monomial basis for the original multi-node Birkhoff interpolation problem. This establishes the correctness of Algorithm~\ref{alg:minimal_basis}.
\end{proof}

\begin{remark} 
In Example~\ref{ex3}, Algorithm~\ref{alg:minimal_basis} reaches $\operatorname{rank}(\tilde{E}_{\mathrm{rref}}) = N$ after the first execution of the loop (lines~7--17), so the minimal monomial interpolation basis $T$ is obtained immediately. If instead $d < N$ after line~17, one updates $\tilde{S}$, $N^*$, and augments the matrix as $\tilde{E} := [\tilde{E}, \boldsymbol{0}_{N \times (N-d)}]$. Remarkably, upon re-entering the loop at line~7, the previously computed row reduction of $\tilde{E}$ stays intact, and Gaussian elimination need only be applied to the newly appended columns.
\end{remark}
Finally, a numerical example is presented.

\begin{example}\label{ex4}
Consider the interpolation nodes $\boldsymbol{z}_1 = (0, 0)$ and $\boldsymbol{z}_2 = (1, 1)$. Given the monomial sequence
$S = [y,\ y^2,\ xy]$
ordered by the graded lexicographic order $y \prec_{\mathrm{grlex}} x$, the incidence matrix is $\boldsymbol{E}= \begin{pmatrix} E_1 \\ E_2 \end{pmatrix}$, where
\[
E_1 = \begin{pmatrix} 1 & 0 & 0 \\ 1 & 1 & 1 \end{pmatrix}, \qquad
E_2 = \begin{pmatrix} 1 & 0 & 0 \end{pmatrix}.
\]

The corresponding sequence of differential operators is
\[
D = \left[ \frac{\partial}{\partial y},\ \frac{\partial^2}{\partial y^2},\ \frac{\partial^2}{\partial x \partial y} \right].
\]

The interpolation conditions $\Delta$ can be written as
\[
\Delta =
\begin{cases}
\delta_{(0,0)} \circ \operatorname{span}\{ D_y,\ D_y + D_y^2 + D_x D_y \}, \\[6pt]
\delta_{(1,1)} \circ \operatorname{span}\{ D_y \}.
\end{cases}
\]
Algorithm~1 yields $\tilde{S} = [y, x, y^2]$. Algorithm~2 then gives
\[
\tilde{E} = 
\begin{pmatrix}
1 & 0 & 0 \\
1 & 0 & 1 \\
1 & 0 & 1
\end{pmatrix}.
\]
Row reduction shows that $\operatorname{rank}(\tilde{E}) = 2 < 3$. Hence the monomial $xy$ must be appended in the prescribed order, so that $\tilde{S} = [y, x, y^2, xy]$, and Algorithm~\ref{alg:minimal_basis} is repeated until $\operatorname{rank}(\tilde{E}) = 3$. Ultimately, Algorithm~\ref{alg:minimal_basis} produces
\[
\tilde{E} = 
\begin{pmatrix}
1 & 0 & 0 & 0 & 0 & 0 \\
1 & 0 & 1 & 1 & 0 & 0 \\
1 & 0 & 1 & 1 & 1 & 0 \\
1 & 0 & 1 & 1 & 0 & \frac{1}{2}
\end{pmatrix}, \quad
\tilde{E}_{\text{rref}} = 
\begin{pmatrix}
1 & 0 & 0 & 0 & 0 & 0 \\
0 & 0 & 1 & 1 & 0 & 0 \\
0 & 0 & 0 & 0 & 0 & 1
\end{pmatrix}.
\]
With respect to the order $y \prec_{\mathrm{grlex}} x$, the minimal monomial interpolation basis is $T = \{y, y^2, y^3\}$.

\end{example}

\begin{remark}
Example~\ref{ex4} shows that in some cases the set $\tilde{S}$ produced by Algorithm~\ref{alg:truncated_set} does not satisfy the proper condition. In such situations, additional monomials must be added before proceeding with the computation. That is, lines~19--21 of the algorithm are executed. 
\end{remark}

\section{Conclusion and future work}\label{sec:5}
Compared with ideal interpolation, the Birkhoff interpolation problem involves discontinuous derivative conditions at the nodes, which makes the computation of the corresponding monomial interpolation basis considerably more challenging. In this paper, we propose a new method for computing a proper monomial basis for Birkhoff interpolation, based on the properties of reverse reduced sets and the ring of formal power series. For the single-node case, the proper monomial basis is obtained directly by reducing the associated coefficient matrix to its reduced row echelon form. In the multi-node setting, we exploit the one-to-one correspondence between interpolation functionals and formal power series to convert the interpolation conditions at each nonzero node into conditions at the origin. The polynomials corresponding to the transformed conditions are then truncated to finitely many monomials, denoted by $g(\boldsymbol{x})$. Finally, Gaussian elimination is performed on the coefficient matrix of $g(\boldsymbol{x})$, yielding the minimal monomial interpolation basis with respect to the prescribed graded order.

This method does not require evaluating the interpolation functionals and avoids the use of Vandermonde matrices. Birkhoff interpolation suffers from poor numerical stability: a small perturbation of the nodes may change the minimal interpolation basis. In future work, we hope to exploit the theoretical framework developed in this paper to compute stable monomial bases for Birkhoff interpolation more conveniently.

\section*{Data availability}
No data was used for the research described in the article.

\section*{Acknowledgments}
This research was supported by the Department of Education of Jilin Province (JJKH20250468KJ), China.

\end{document}